\author{Eric Zhu}
\email{eric\_zhu1@brown.edu}
\title{Brauer-Manin obstruction on generalized Kummer varieties}
\date{\today}
\newtheorem{theorem}{Theorem}[section]
\newtheorem{corollary}[theorem]{Corollary}
\newtheorem{proposition}[theorem]{Proposition}
\newtheorem{lemma}[theorem]{Lemma}
\theoremstyle{definition}
\newtheorem{example}[theorem]{Example}
\theoremstyle{remark}
\newtheorem{remark}[theorem]{Remark}
\newcommand{\bZ}{\mathbb{Z}}
\newcommand{\bQ}{\mathbb{Q}}
\newcommand{\et}{\mathrm{\grave{e}t}}
\begin{document}

\begin{abstract}
Given an abelian variety $A$ over a number field, we consider the generalized Kummer varieties of $A$ coming from quotients of $A$ by an automorphism of prime order $p > 2$. We prove that the Brauer-Manin obstruction on these generalized Kummer varieties only can come from the $p$-primary part of the Brauer group. This is applied to show that certain families of such varieties have no Brauer-Manin obstruction to the local-global principle.
\end{abstract}
\date{\today}
\maketitle
\section{Introduction}

To determine whether a "nice" variety $X$ over a number field $k$ has rational points, the local-global principle says to test whether it has $k_v$-points for all places $v$ of $k$. However, even if a variety has $k_v$-points for all $v$, it still may not have a rational point. Due to this, we often work with the Brauer-Manin obstruction, a refinement of the local-global principle. This arises from the Brauer-Manin pairing $X(\mathbb{A}_k) \times \mathrm{Br}(X) \to \mathbb{Q}/ \mathbb{Z}$ coming from global class field theory. Here, the Brauer group $\mathrm{Br}(X)$ is defined as $\mathrm{H}^2_\et(X, \mathbb{G}_m)$. For any subgroup $B \subseteq \mathrm{Br}(X)$, we denote $X(\mathbb{A}_k)^B$ to be the left kernel of $B$ with respect to the Brauer-Manin pairing and denote $X(\mathbb{A}_k)^\mathrm{Br} = X(\mathbb{A}_k)^{\mathrm{Br}(X)}$ to be the Brauer-Manin set. This set satisfies the property that $X(k) \subseteq X(\mathbb{A}_k)^\mathrm{Br} \subseteq X(\mathbb{A}_k)$ and so can be used to detect an obstruction to having rational points, even if $X(\mathbb{A}_k)$ is nonempty.

A fruitful line of study in the Brauer-Manin obstruction is to narrow down the Brauer group to a smaller subgroup that captures the entire obstruction. For example, in \cite{CreutzVirayDegree}, they show that for torsors of abelian varieties over number fields, the Brauer-Manin obstruction is completely determined by the $d$-primary part of the Brauer group where $d$ is the period of the torsor. For curves, this was studied in \cite{CreutzVirayVolochCurves}. Similarly, in \cite{SkorobogatovZarhinKummer}, they prove that for Kummer varieties, the 2-primary subgroup of the Brauer group captures the Brauer-Manin obstruction. These results are important for both computational and theoretical reasons.	In general, the Brauer group can be quite large and so restricting the problem to only looking at a much smaller subgroup can be useful.

To this end, we prove the following:
\begin{theorem}
Let $A$ be an abelian variety of dimension $g \geq 2$ over a number field $k$ with an automorphism $\zeta$ of prime order $p$ defined over $k$. Further assume that $\zeta$ only has finitely many fixed points in $A(\overline{k})$. Let $X$ be a generalized Kummer variety associated to $A$ and $\zeta$, and let $\mathrm{Br}(X)[p^\perp]$ be the prime-to-$p$ torsion subgroup of $\mathrm{Br}(X)$. If $B \subseteq \mathrm{Br}(X)$ satisfies $X(\mathbb{A}_k)^B \neq \emptyset$, then $X(\mathbb{A}_k)^{B + \mathrm{Br}(X)[p^\perp]} \neq \emptyset$.
\end{theorem}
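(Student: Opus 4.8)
The plan is to adapt the method of Skorobogatov--Zarhin \cite{SkorobogatovZarhinKummer} from the classical Kummer case $p=2$: control the prime-to-$p$ part of $\mathrm{Br}(X)$ by the Brauer group of the torsor under $A$ out of which $X$ is built, and then apply the known structure of the Brauer--Manin obstruction on torsors of abelian varieties. Let $T$ be the torsor under $A$ underlying $X$, equipped with the induced action of $\zeta$; write $\pi\colon T\to Y:=T/\langle\zeta\rangle$ for the quotient and $f\colon X\to Y$ for the resolution, an isomorphism over the smooth locus $Y^{\circ}$ of $Y$. Since $\zeta$ has only finitely many fixed points on $A$, its fixed locus $F$ on $T$ is finite as well, hence of codimension $g\ge 2$ in the smooth variety $T$ --- this is precisely where the hypothesis $g\ge 2$ enters. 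By Grothendieck's purity theorem $\mathrm{Br}(T)\xrightarrow{\ \sim\ }\mathrm{Br}(T\setminus F)$, while $\mathrm{Br}(X)\hookrightarrow\mathrm{Br}(f^{-1}(Y^{\circ}))=\mathrm{Br}(Y^{\circ})$ by smoothness of $X$. The cover $\pi\colon T\setminus F\to Y^{\circ}$ is finite étale Galois with group $\langle\zeta\rangle\cong\bZ/p\bZ$, so corestriction satisfies $\mathrm{cor}\circ\pi^{*}=[p]$ on $\mathrm{Br}(Y^{\circ})$; as $[p]$ is invertible on prime-to-$p$ torsion, restriction followed by $\pi^{*}$ gives an order-preserving injection
\[
\Psi\colon\ \mathrm{Br}(X)[p^{\perp}]\ \hookrightarrow\ \mathrm{Br}(T)[p^{\perp}]^{\langle\zeta\rangle},
\]
with one-sided inverse $\tfrac1p\,\mathrm{cor}$.

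Next I would make the Brauer--Manin pairing compatible with $\Psi$. Replacing $B$ by its projection to the $p$-primary part of $\mathrm{Br}(X)$ is harmless, since orthogonality to a Brauer class is equivalent to orthogonality to each of its primary components; as $B$ and $\mathrm{Br}(X)[p^{\perp}]$ then lie in complementary primary summands of $\mathrm{Br}(X)$, and $X(\mathbb{A}_k)$ is compact, it suffices to prove $X(\mathbb{A}_k)^{B}\cap X(\mathbb{A}_k)^{B'}\ne\emptyset$ for every \emph{finite} subgroup $B'\subseteq\mathrm{Br}(X)[p^{\perp}]$. Given $(x_v)\in X(\mathbb{A}_k)^{B}$, move each $x_v$ into the dense open set $f^{-1}(Y^{\circ})(k_v)$ without changing any evaluation $\alpha(x_v)$, $\alpha\in B\cup B'$ (Brauer evaluations are locally constant), obtaining $(y_v)\in Y^{\circ}(\mathbb{A}_k)$. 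For $\alpha\in B'$ with image $\beta\in\mathrm{Br}(Y^{\circ})$ one has $\alpha(x_v)=\beta(y_v)$, and $\beta(y_v)$ --- up to the factor $p$, harmless because $\beta$ has order prime to $p$ --- is computed by evaluating $\Psi(\alpha)\in\mathrm{Br}(T)$ on a lift of $y_v$ to $T$, either over $k_v$ itself or over the cyclic degree-$p$ extension splitting the $\bZ/p\bZ$-torsor $\pi^{-1}(y_v)$; the choice of lift does not matter, since $\Psi(\alpha)$ is $\langle\zeta\rangle$-invariant. Thus the $B'$-pairing of $(x_v)$ is entirely governed by $\Psi(B')\subseteq\mathrm{Br}(T)[p^{\perp}]$, evaluated on (possibly local, or suitably twisted) lifts of $(y_v)$ to $T$.

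It remains to modify $(y_v)$ so that some lift to $T$ becomes orthogonal to $\Psi(B')$ while the pushforward to $X$ stays orthogonal to $B$. Here one invokes \cite{CreutzVirayDegree}: by construction the period of $T$ is a power of $p$ --- reflecting that $\ker(1-\zeta)$ is a $p$-group, since $1-\zeta$ acts invertibly on $A[\ell]$ for every prime $\ell\ne p$ --- so $\mathrm{Br}(T)[p^{\perp}]$ is contained in the prime-to-period part of $\mathrm{Br}(T)$, which does not contribute to the Brauer--Manin obstruction on $T$. Running the Creutz--Viray modification relative to the given adelic point, at a finite set of places disjoint from the support of $B$ (so the $p$-primary $B$-pairing, already trivial at those places, is untouched) and keeping $(y_v)$ liftable, produces an adelic point of $T$ orthogonal to $\Psi(B')$; pushing it forward through $\pi$ and $f$ yields the desired point of $X(\mathbb{A}_k)^{B}\cap X(\mathbb{A}_k)^{B'}$.

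The main obstacle is this last step together with the preceding interface: one must simultaneously (i) handle adelic points of $X$ lying over the singular locus of $Y$ or failing to lift through the $\bZ/p\bZ$-cover $T\setminus F\to Y^{\circ}$ --- dealt with by density and by passing to the correct local twist of $T$, using that prime-to-$p$ evaluations are insensitive to the $p$-torsor ambiguity --- and (ii) annihilate the prime-to-$p$ obstruction on $T$ \emph{relative to} a fixed adelic point and \emph{without} disturbing the $p$-primary orthogonality to $B$. Point (ii) requires a version of the Creutz--Viray argument with explicit control on the support of the correcting translations, so that they may be placed only at places where every class of $B$ is unramified and evaluates trivially; carrying this out is the technical heart of the proof.
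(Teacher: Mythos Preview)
Your reduction via the injection $\Psi\colon\mathrm{Br}(X)[p^\perp]\hookrightarrow\mathrm{Br}(T)[p^\perp]^{\langle\zeta\rangle}$ is correct and parallels the paper's opening moves. The serious gap is the final step. At each place $v$ the point $y_v\in Y^\circ(k_v)$ lifts only to some \emph{local} twist $T_{\xi_v}$, and there is no reason for a single global twist $T_\xi$ to carry an adelic point: Grunwald--Wang matches the $\xi_v$ at finitely many places, but $T_\xi(k_v)$ may well be empty at the remaining ones. The theorem of Creutz--Viray concerns the Brauer--Manin pairing on a fixed torsor equipped with an adelic point; here you have neither a fixed global torsor nor an adelic point on it, so there is nothing to ``translate'' and no global reciprocity law to invoke. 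Your proposed refinement in (ii) --- controlling the support of the Creutz--Viray correction --- presupposes the very adelic point that is missing, so it does not close the gap.

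The paper circumvents this entirely by proving something much stronger than $\Psi(\alpha)\in\mathrm{Br}(T)^{\langle\zeta\rangle}$: for $n\equiv 1\pmod p$ the $\zeta$-invariant part of $\mathrm{H}^2_\et(T_\xi,\mu_n)$ is exactly the constant subgroup $\mathrm{H}^2(k,\mu_n)$, via an explicit retraction built from corestriction over the fixed scheme and a section built from $\wedge^2$ of the multiplication-by-$n$ torsor. Thus the pullback of any $\mathcal{B}\in\mathrm{Br}(X)[n]$ to $T_\xi$ is the image of some $a\in\mathrm{H}^2(k,\mu_n)$; a comparison along the exceptional divisors --- which are identical for every twist --- then shows that $a$ is \emph{independent of $\xi$}. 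Hence $\mathcal{B}(P_v)=\mathrm{inv}_v(a)$ regardless of which local twist $\xi_v$ was used to lift $P_v$, and $\sum_v\mathcal{B}(P_v)=0$ by reciprocity for the single global class $a$. No modification of $(P_v)$ and no appeal to \cite{CreutzVirayDegree} is needed.
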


These fixed points of $\zeta$ form a finite subgroup of $A$ that we denote $T$ or $A[1 - \zeta]$. These fixed points mean that the quotient $A/\langle \zeta \rangle$ is singular and the generalized Kummer varieties $X$ we study in this paper are certain resolutions of this quotient. In \cite{SkorobogatovZarhinKummer} and \cite{CreutzVirayDegree}, they prove this statement for Kummer varieties when $p = 2$. The work of this paper extends this result to higher order automorphisms. In other words, in the language of \cite{viray2023rationalpointsvarietiesbrauermanin}, we prove that the $p$-primary part of the Brauer group completely captures the Brauer-Manin obstruction for generalized Kummer varieties.

Since $A$ is defined over $k$, it has a rational point by definition, and therefore so do its quotients and resolutions. So the discussion of the Brauer-Manin obstruction for these varieties is trivial. Therefore, we will work with certain torsors of $A$. We note that since $T \subseteq A$, we have a natural map $\mathrm{H}^1(k, T) \to \mathrm{H}^1(k, A)$. We will work with torsors $Y$ of $A$ coming from the image of this map. Note that these are exactly the torsors such that $\zeta$ gives an automorphism on Y defined over $k$, which we will also denote $\zeta$. These torsors are a subset of what are known as the $p$-coverings of the abelian variety, since the map $\mathrm{H}^1(k,T) \to \mathrm{H}^1(k,A)$ factors through the map $\mathrm{H}^1(k, A[p]) \to \mathrm{H}^1(k, A)$.

In this paper, for a variety $X$ over a number field $k$, we let $\overline{X}$ denote the base change to the algebraic closure $\overline{k}$. In the study of the Brauer group, there are two important subgroups. Let $\mathrm{Br}_0(X) \subseteq \mathrm{Br}_1(X) \subseteq \mathrm{Br}(X)$, where $\mathrm{Br}_0(X)$ is the image of $\mathrm{Br}(k) \to \mathrm{Br}(X)$ and $\mathrm{Br}_1(X)$ is the kernel of $\mathrm{Br}(X) \to \mathrm{Br}(\overline{X})$. Class field theory implies that the elements in $\mathrm{Br}_0(X)$ pair trivially with the elements in $X(\mathbb{A}_k)$ so for the sake of the Brauer-Manin obstruction, it suffices to study $\mathrm{Br}(X)/\mathrm{Br}_0(X)$. 

\section{Construction of Generalized Kummer Varieties}
In this section, we discuss how to construct generalized Kummer varieties. As mentioned in the introduction, let $A$ be an abelian variety of dimension $g \geq 2$ over a number field $k$ with an automorphism $\zeta$ of $A$ of prime order $p > 2$ defined over $k$. We take a torsor $Y$ of $A$ coming from an element of $\mathrm{H}^1(k, A)$ living in the image of $\mathrm{H}^1(k,T) \to \mathrm{H}^1(k,A)$. Due to this choice, there is an automorphism on $Y$ defined over $k$ that is compatible with the automorphism of $\zeta$ on $A$ and we denote the automorphism as $\zeta$ on $Y$ as well.

To construct a resolution of singularities for $Y/\langle \zeta \rangle$, it suffices to work \'etale locally. The action of $\zeta$ on $Y$ around a fixed point looks like the action on $\mathbb{A}^n$ of some order $p$ matrix in $\mathrm{GL}_n$, which we call $M$. We can think of this matrix as acting on the ring $k[x_1, \dots, x_n]$. Then, the spectrum of the ring of invariants will be exactly the quotient variety $\mathbb{A}^n/M$. This quotient is a toric variety with a cyclic quotient singularity, and therefore we can resolve this using a toric resolution corresponding to a subdivision of the fan. Furthermore, we can choose a simple normal crossings resolution, and we fix such a resolution to use. We can represent this resolution as a blowup by some ideal $I$ of the ring of invariants. Then, we can consider the ideal $J$ generated by $I$ inside of $k[x_1, \dots, x_n]$. Blowing up $\mathbb{A}^n$ at $J$ satisfies the property that $(\mathrm{Bl}_J\mathbb{A}^n)/M \cong \mathrm{Bl}_I(\mathbb{A}^n/M)$. This is because the exceptional divisor in $\mathrm{Bl}_J\mathbb{A}^n$ is given by $J \oplus J^2 \oplus \cdots$ since the blow up is the Proj of the Rees algebra. Since $J$ is invariant under $M$, the exceptional divisor is as well. However $\mathrm{Bl}_J \mathbb{A}^n$ may not be smooth. This blowup corresponds to lifting the subdivision from the fan of $\mathbb{A}^n/M$ to the cone for $\mathbb{A}^n$. Therefore, we can subdivide this subdivision further to get a resolution of $\mathrm{Bl}_J \mathbb{A}^n$.

To transfer this back to $Y$, we note that around any fixed point $x \in T$, there is an open affine subvariety $x \in U \subset Y$. Furthermore, by replacing $U$ with $U \cap \zeta U \cap \cdots \cap \zeta^{p-1}U$, we can assume that $\zeta$ acts on $U$. Then, the Luna map gives us a $\zeta$-equivariant \'etale morphism from $U \to \mathbb{A}^n$. Let $Y'$ be the variety obtained by gluing the schemes $\mathrm{Bl}_{\pi^{-1}(J)} U = \mathrm{Bl}_J \mathbb{A}^n \times_{\mathbb{A}^n} U$ for each fixed point of $\zeta$ along with the open set that is the complement of all the fixed points. The \'etale local computation is enough to guarantee that the exceptional divisors on the blowup of $U$ are the same as the blowup of $\mathbb{A}^n$ by the same argument as in Proposition 3.2 in \cite{navone2025transcendentalbrauergroupscubic}.

Then, $\zeta$ lifts to an automorphism of $Y'$ by the universal property of blow ups. We define $X = Y' / \langle \zeta \rangle$ to be a generalized Kummer variety.  By the construction of the resolution, $X$ is smooth. Let $\sigma : Y' \to Y$ be the blow up map and $\pi : Y' \to X$ be the quotient map. The variety $Y'$ may not be smooth, but will be normal. Furthermore, we let $Y''$ be a resolution of $Y'$.

\begin{example} 
\label{ex:E}
If $E$ is an elliptic curve, then we can consider the automorphism on $E \times E$ that maps $(x,y)$ to $(y, -x-y)$. This is an order 3 automorphism on $E \times E$ and so we can construct the generalized Kummer variety $X$ that was studied in \cite{van2007cubic}. Here, the minimal resolutions gives $\mathbb{P}^1 \vee \mathbb{P}^1$ as exceptional divisors above each of the 3 fixed points of the action. In this example, the toric geometry of the resolutions is as follows. The variety $\mathbb{A}^2$ is given by the cone generated by $(1,0)$ and $(0,1)$ in the lattice $\mathbb{Z}^2$. 

\begin{figure}[h]
\centering
\begin{subfigure}[b]{0.29\textwidth}
\begin{tikzpicture}
\draw[->] (-1,0)--(3,0) node[right]{$x$};
\draw[->] (0,-1)--(0,3) node[above]{$y$};
\draw[-] (0,0)--(1/3,2/3) node[right, yshift=2pt]{\tiny ${\scriptstyle (\frac13,\frac23)}$};
\draw[-] (0,0)--(2/3,1/3) node[right, yshift=-2pt]{\tiny ${\scriptstyle (\frac23,\frac13)}$};
\draw[thick,->] (0,0)--(3,3/2);
\draw[thick,->] (0,0)--(3/2,3);
\fill (3,3) circle(1pt);
\foreach \m in {0,...,2}
  \foreach \n in {1,...,3}{
    \pgfmathsetmacro{\a}{\m}
    \pgfmathsetmacro{\b}{\n}
    \pgfmathsetmacro{\c}{\m + 1/3}
    \pgfmathsetmacro{\d}{\n - 1/3}
    \pgfmathsetmacro{\e}{\m + 2/3}
    \pgfmathsetmacro{\f}{\n - 2/3}
    \pgfmathsetmacro{\g}{\m + 1}
    \pgfmathsetmacro{\h}{\n - 1}
    \fill (\a,\b) circle(1pt);
    \fill (\c,\d) circle(1pt);
    \fill (\e,\f) circle(1pt);
    \fill (\g,\h) circle(1pt);
}
\end{tikzpicture}
\label{Resolution1}
\caption*{\text{Resolution of Quotient of $\mathbb{A}^2$}}
\end{subfigure}
\hfill
\begin{subfigure}[b]{0.28\textwidth}
\begin{tikzpicture}
\draw[->] (-1,0)--(3,0) node[right]{$x$};
\draw[->] (0,-1)--(0,3) node[above]{$y$};
\draw[-] (0,0)--(2,1) node[right, yshift=-2pt]{${\scriptstyle (2,1)}$};
\draw[-] (0,0)--(1,2) node[right]{$\scriptstyle (1,2)$};
\foreach \x in {0,...,3}
  \foreach \y in {0,...,3}
    \fill (\x,\y) circle(1pt);
\draw[thick,->] (0,0)--(3,3/2);
\draw[thick,->] (0,0)--(3/2,3);
\end{tikzpicture}
\label{Resolution2}
\caption*{\text{Lift of Resolution to $\mathbb{A}^2$}}
\end{subfigure}
\hfill
\begin{subfigure}[b]{0.36\textwidth}
\begin{tikzpicture}
\draw[->] (-1,0)--(3,0) node[right]{$x$};
\draw[->] (0,-1)--(0,3) node[above]{$y$};
\draw[-] (0,0)--(2,1) node[right, yshift=-4pt, xshift=-3pt]{${\scriptstyle(2,1)}$};
\draw[thick,->] (0,0)--(3,3/2);
\draw[-] (0,0)--(1,1) node[right]{${\scriptstyle(1,1)}$};
\draw[thick,->] (0,0)--(3,3);
\draw[-] (0,0)--(1,2) node[right, xshift=-1pt]{${\scriptstyle (1,2)}$};
\draw[thick,->] (0,0)--(3/2,3);
\foreach \x in {0,...,3}
  \foreach \y in {0,...,3}
    \fill (\x,\y) circle(1pt);
\end{tikzpicture}
\label{Resolution3}
\caption*{\text{Resolution of the Blowup of $\mathbb{A}^2$}}
\end{subfigure}
\end{figure}

Then the variety given by quotienting $\mathbb{A}^2$ by the action $(x,y) \to (y, -x -y)$ is represented by the same cone as before, but now with the lattice $\mathbb{Z}^2 + \mathbb{Z}\left(\frac{1}{3}, \frac{2}{3} \right)$. This cone is not a smooth cone in this lattice, but we can subdivide the cone with the rays going through $\left( \frac{1}{3}, \frac{2}{3} \right)$ and $\left(\frac{2}{3}, \frac{1}{3} \right)$ giving us three smooth cones. Lifting this resolution back to the lattice $\mathbb{Z}^2$ gives us the rays through $(1,2)$ and $(2,1)$, but the cone defined by these two rays is not smooth. Therefore, we can subdivide this further with the ray through $(1,1)$ to get a smooth subdivision of the original cone.
\end{example}

Now, let us prove some facts about the geometry of such resolutions. Let $E = \sigma^{-1}(T)$ be the exceptional divisor in $Y'$, and let $D = \pi(E) \subset X$ be the image in $X$.

For any generalized Kummer variety $X$, each irreducible component of $D$ is simply connected, since they are complete toric varieties and therefore are simply connected due to Section 3.2 in \cite{fulton1993introduction}.

\begin{lemma}
The irreducible exceptional divisors of $X \to Y/\langle\zeta\rangle$ are linearly independent in $\mathrm{Pic}(X)$.
\end{lemma}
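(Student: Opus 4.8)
The plan is to show that any hypothetical linear relation among the exceptional components becomes, after pushing down, a relation of divisors on the singular quotient $Y/\langle\zeta\rangle$, which is impossible because these divisors are contracted to a finite set of points. First I would set $Z := Y/\langle\zeta\rangle$ and consider the morphism $\psi\colon X = Y'/\langle\zeta\rangle \to Z$ induced by $\sigma\colon Y' \to Y$; it is proper and birational. Since $\zeta$ has only finitely many fixed points on $Y$ (the set $T$ along which $\sigma$ is blown up) and $p$ is prime, the group $\langle\zeta\rangle$ acts freely on $Y \setminus T$, so $\psi$ restricts to an isomorphism over $Z \setminus S$, where $S := \psi(D)$ is the (finite) image of $T$. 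I would also record that $Z$ is normal, being the quotient of the smooth variety $Y$ by a finite group, and that $\dim Z = g \geq 2$, so $S$ has codimension at least $2$ in $Z$.

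Next, suppose $D_1, \dots, D_r$ are the irreducible components of $D$ and that $\sum_{i=1}^r a_i D_i \sim 0$ in $\mathrm{Pic}(X)$, say $\sum_i a_i D_i = \mathrm{div}_X(f)$ for some $f \in k(X)^\times = k(Z)^\times$. Because $\mathrm{div}_X(f)$ is supported on $D$, the function $f$ is a unit on $X \setminus D$, hence on the smaller open set $\psi^{-1}(Z \setminus S) \cong Z \setminus S$; so $f$ is a unit on $Z \setminus S$. Since $Z$ is normal and $S$ has codimension $\geq 2$, algebraic Hartogs' lemma gives $\mathcal{O}_Z(Z \setminus S) = \mathcal{O}_Z(Z)$, so $f$ and $f^{-1}$ are both regular on $Z$, i.e. $f \in \mathcal{O}_Z(Z)^\times$. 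Pulling back, $f \in \mathcal{O}_X(X)^\times$, so $\mathrm{div}_X(f) = 0$ as a Weil divisor; since the $D_i$ are distinct prime divisors, $a_i = 0$ for all $i$, which is the assertion. One may phrase this equally well through pushforward: each $D_i$ is contracted by $\psi$ to a point, so $\psi_*\mathrm{div}_X(f) = \mathrm{div}_Z(f) = 0$, meaning $f$ has neither zero nor pole on $Z$ in codimension one, and normality of $Z$ then forces $f$ to be a unit on $Z$.

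I expect no serious obstacle here; the argument only needs the facts that $\psi$ is a morphism restricting to an isomorphism exactly off the $\zeta$-fixed locus — immediate from the construction of $Y'$ as a blow-up which is an isomorphism over $Y \setminus T$ together with freeness of the action there — that $k(X) = k(Z)$, and that the singularities of $Z$ are isolated, which is precisely where the hypothesis $g \geq 2$ enters (for $g = 1$ the quotient is already smooth and the statement is empty). In particular the proof uses neither the toric description of the resolution nor the simple-connectedness of the components of $D$; the only mild care needed is in the bookkeeping of the exceptional loci and in checking that Hartogs' lemma applies.
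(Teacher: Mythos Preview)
Your argument is correct and is essentially the same as the paper's: the paper proves the more general statement that for any proper birational morphism $f\colon \tilde S\to S$ of integral normal varieties the exceptional divisors are linearly independent in $\mathrm{Cl}(\tilde S)$, by exactly your pushforward phrasing—if $\mathrm{div}_{\tilde S}(g)=\sum a_iE_i$ then $\mathrm{div}_S(g)=0$, so normality of $S$ forces $g$ to be a global unit and hence all $a_i=0$. Your Hartogs formulation is just a restatement of this in the specific situation where the exceptional image has codimension $\ge 2$.
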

More generally, the following is true: 
\begin{lemma}
For any proper, birational morphism $f: \tilde{S} \to S$ of integral normal varieties, the exceptional divisors $E_1, \dots, E_n$ are linearly independent in $\mathrm{Cl}(\tilde{S})$.
\end{lemma}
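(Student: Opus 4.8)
The approach I would take is to show that a would-be linear relation among the $E_i$ forces the corresponding rational function to be a unit on $S$, and hence its divisor on $\tilde{S}$ to vanish. Suppose toward a contradiction that $\sum_{i=1}^{n} a_i [E_i] = 0$ in $\mathrm{Cl}(\tilde{S})$ for integers $a_i$ not all zero. Then there is a nonzero rational function $\phi \in k(\tilde{S})^\times$ with $\mathrm{div}_{\tilde{S}}(\phi) = \sum_{i=1}^{n} a_i E_i$ as Weil divisors. Since $f$ is birational, the inclusion of function fields $k(S) \hookrightarrow k(\tilde{S})$ is an equality, so $\phi$ is equally well a rational function on $S$, and it suffices to prove $\mathrm{div}_S(\phi) = 0$: for then $\phi$ and $\phi^{-1}$ both lie in $\mathcal{O}_S(S) = \bigcap_{Z} \mathcal{O}_{S,\eta_Z}$ (the intersection over prime divisors $Z \subset S$, valid because $S$ is normal), so $\phi \in \mathcal{O}_S(S)^\times$, hence $f^\ast\phi \in \mathcal{O}_{\tilde{S}}(\tilde{S})^\times$, hence $\mathrm{div}_{\tilde{S}}(\phi) = 0$, forcing all $a_i = 0$.

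To see $\mathrm{div}_S(\phi) = 0$, fix a prime divisor $Z \subset S$ with generic point $\eta_Z$; I want $v_Z(\phi) = 0$, where $v_Z$ is the valuation of the DVR $\mathcal{O}_{S,\eta_Z}$ (a DVR since $S$ is normal). Apply the valuative criterion of properness to the proper morphism $f$, the valuation ring $R = \mathcal{O}_{S,\eta_Z}$, and the generic point $\mathrm{Spec}\, k(S) \to \tilde{S}$: this yields a point $\xi \in \tilde{S}$ with $f(\xi) = \eta_Z$ and a local homomorphism $\mathcal{O}_{\tilde{S},\xi} \to \mathcal{O}_{S,\eta_Z}$ of subrings of $k(S)$. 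Since $\tilde{S}$ is normal, $\mathcal{O}_{\tilde{S},\xi}$ is again a DVR, and a DVR admits no proper local overring with the same fraction field that is itself a valuation ring, so $\mathcal{O}_{\tilde{S},\xi} = \mathcal{O}_{S,\eta_Z}$ and $v_Z = v_\xi$. Let $\tilde{Z} = \overline{\{\xi\}} \subset \tilde{S}$; the local ring $\mathcal{O}_{\tilde{S},\xi}$ being a DVR, $\xi$ has codimension $1$, so $\tilde{Z}$ is a prime divisor, and because $f$ is closed with $\eta_Z \in f(\tilde{Z}) \subseteq Z$ we get $f(\tilde{Z}) = Z$, of codimension $1$ in $S$. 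Hence $\tilde{Z}$ is not $f$-exceptional and so is none of $E_1, \dots, E_n$; since $\mathrm{div}_{\tilde{S}}(\phi)$ is supported on $\{E_1, \dots, E_n\}$, we conclude $v_Z(\phi) = v_\xi(\phi) = v_{\tilde{Z}}(\phi) = 0$, as desired. The same argument applies to the exceptional divisors of $X \to Y/\langle\zeta\rangle$, giving the preceding lemma (linear independence in $\mathrm{Cl}(X)$ implying it in $\mathrm{Pic}(X)$ since $X$ is smooth).

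The part I expect to require the most care is precisely the comparison of valuations: the claim that over the generic point of a prime divisor of $S$ there sits an honest prime divisor $\tilde{Z}$ of $\tilde{S}$ with the same associated valuation. This is where normality of both $S$ and $\tilde{S}$ and properness of $f$ are all used, and it is the geometric content behind the slogan that a proper birational morphism of normal varieties is an isomorphism away from codimension $2$ in the target. Equivalently, one can phrase the whole argument in terms of the proper pushforward of cycles: $f_\ast E_i = 0$ because $E_i$ is contracted to something of lower dimension, while $f_\ast \mathrm{div}_{\tilde{S}}(\phi) = \mathrm{div}_S(\phi)$ because the function-field extension is trivial, so that $\mathrm{div}_S(\phi) = \sum_i a_i f_\ast E_i = 0$; but unwinding this pushforward formula comes down to the same valuation comparison. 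Everything else in the proof is formal.
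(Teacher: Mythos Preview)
Your argument is correct and follows exactly the same line as the paper's: a relation $\mathrm{div}_{\tilde S}(\phi)=\sum a_iE_i$ forces $\mathrm{div}_S(\phi)=0$, hence $\phi\in\mathcal{O}_S(S)^\times$, hence $\mathrm{div}_{\tilde S}(\phi)=0$. The only difference is that the paper asserts $\mathrm{div}_S(g)=0$ without justification, whereas you supply that step via the valuative criterion (one small wrinkle: ``$\mathcal{O}_{\tilde S,\xi}$ is a DVR since $\tilde S$ is normal'' presupposes that $\xi$ has codimension~$1$; the cleaner route is to use the inclusion $\mathcal{O}_{S,\eta_Z}\hookrightarrow\mathcal{O}_{\tilde S,\xi}$ coming from $f(\xi)=\eta_Z$ together with maximality of valuation rings under domination, or simply your pushforward reformulation).
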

\begin{proof}
Assume that there is some $g \in k(\tilde{S})$ such that $\mathrm{div}_{\tilde{S}}(g) = \sum a_i E_i$. Then, the fact that $f$ is a birational morphism tells us that $k(\tilde{S}) \cong k(S)$, so we can also think of $g$ as a rational function on $S$. Now, on $S$, $\mathrm{div}_S(g) = 0$, which implies that $g$ is a unit in $\mathcal{O}_S(S)$ that is integrally closed, since $S$ is an integral normal variety. This implies that $g$ must also be a unit in $\mathcal{O}_{\tilde{S}}(\tilde{S})$, so that $\mathrm{div}_{\tilde{S}}(g) = 0$.
\end{proof}
\section{Proving that $\mathrm{Br}(X)/\mathrm{Br}(X_0)$ is finite}

Let $Y_0 = Y \backslash T$ and $X_0 = \pi(\sigma^{-1}(Y_0))$, which is the smooth locus of $Y/\langle \zeta \rangle$.

\begin{proposition}
The abelian group $\mathrm{Pic}(\overline{X})$ is torsion-free.
\end{proposition}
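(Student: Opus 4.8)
The plan is to reduce the proposition to the vanishing of $\pi_1^\et(\overline{X})$. Since $\overline{X}$ is smooth, proper and geometrically connected, the Kummer sequence identifies $\mathrm{Pic}(\overline{X})[n]$ with $\mathrm{H}^1_\et(\overline{X},\mu_n)\cong\mathrm{Hom}(\pi_1^\et(\overline{X}),\mathbb{Z}/n)$ for every $n$ (here $\mathrm{H}^0(\overline{X},\mathbb{G}_m)=\overline{k}^*$ is divisible), so it suffices to show that $\pi_1^\et(\overline{X})$ has trivial abelianization. Because $\pi_1^\et$ of a variety over an algebraically closed field of characteristic $0$ does not change under enlargement of the field and coincides with the profinite completion of the topological $\pi_1$ over $\mathbb{C}$, I would reduce to $\overline{k}=\mathbb{C}$ and prove $\pi_1(\overline{X}^{\mathrm{an}})=1$. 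Throughout one uses $\overline{Y}\cong\overline{A}$, so $\overline{X}=\overline{Y'}/\langle\zeta\rangle$ with $\overline{Y'}\to\overline{A}$.

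First I would compute $\pi_1$ of the smooth locus $\overline{X_0}=\overline{X}\setminus D=(\overline{A}\setminus T)/\langle\zeta\rangle$. As $g\geq 2$, removing the codimension $\geq 2$ set $T$ does not change $\pi_1$, so $\pi_1(\overline{A}\setminus T)=\pi_1(\overline{A})=H_1(\overline{A},\mathbb{Z})\cong\mathbb{Z}^{2g}$; and since $p$ is prime, $\langle\zeta\rangle$ acts freely on $\overline{A}\setminus T$, so $\overline{A}\setminus T\to\overline{X_0}$ is an étale $\mathbb{Z}/p$-cover and $1\to\mathbb{Z}^{2g}\to\pi_1(\overline{X_0})\to\mathbb{Z}/p\to1$. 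Analytically near (the image of) $t\in T$, the singularity of $\overline{A}/\langle\zeta\rangle$ is $\mathbb{C}^g/M$ with $M$ of order $p$ and $M-1$ invertible (because $1-\zeta$ is an isogeny), so $M$ acts freely on $S^{2g-1}$ and the local fundamental group of the puncture is $\pi_1(S^{2g-1}/M)=\mathbb{Z}/p$; its generator $\delta_t$ maps onto the deck group $\mathbb{Z}/p$ and hence has order exactly $p$ in $\pi_1(\overline{X_0})$. Thus the sequence splits and $\pi_1(\overline{X_0})\cong\mathbb{Z}^{2g}\rtimes_{\zeta_*}\mathbb{Z}/p$, where $\zeta_*$ is the action of $\zeta$ on $H_1(\overline{A},\mathbb{Z})$.

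Next I would feed in the resolution. Fix $t_0\in T$. A neighbourhood $V_t$ of the exceptional fibre $D_t\subset D$ over $t$ retracts onto $D_t$, which is a simple normal crossings union of simply connected complete toric varieties whose dual complex is contractible, and so is simply connected; while $V_t\setminus D_t$ still has $\pi_1=\langle\delta_t\rangle\cong\mathbb{Z}/p$. Van Kampen then gives $\pi_1(\overline{X})=\pi_1(\overline{X_0})/N$ with $N=\langle\langle\delta_t:t\in T\rangle\rangle$. Writing $\delta_t=(v_t,1)$ in $\mathbb{Z}^{2g}\rtimes\mathbb{Z}/p$, a short computation in the semidirect product shows $N$ contains $\delta_{t_0}$ (a lift of the generator of $\mathbb{Z}/p$) and meets $\mathbb{Z}^{2g}$ in exactly $(1-\zeta_*)\mathbb{Z}^{2g}+\langle v_t-v_{t_0}:t\in T\rangle$, so $\pi_1(\overline{X})\cong\bigl(\mathbb{Z}^{2g}/(1-\zeta_*)\mathbb{Z}^{2g}\bigr)\big/\langle\overline{v_t}-\overline{v_{t_0}}\rangle$. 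The snake lemma for the isogeny $1-\zeta$ canonically identifies $\mathbb{Z}^{2g}/(1-\zeta_*)\mathbb{Z}^{2g}$ with $\ker(1-\zeta\colon\overline{A}\to\overline{A})=T$, and passing to the universal cover $H_1(\overline{A},\mathbb{R})\to\overline{A}$ one checks that the local monodromy around a lift $\tau_t$ of $t$ is the affine deck transformation $w\mapsto\zeta_* w+(1-\zeta_*)\tau_t$; hence $v_t\equiv(1-\zeta_*)\tau_t$ and $\overline{v_t}$ corresponds to $t$ under the identification. Therefore $\pi_1(\overline{X})\cong T/\langle t-t_0:t\in T\rangle=0$, completing the proof.

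The hard part will be that last step: showing that the monodromy class $v_t$ of the vanishing loop at the singular point $t$ is, modulo $(1-\zeta_*)\mathbb{Z}^{2g}$, the class of $t$ itself in $T$, since this is exactly what makes the relations from the different singular points, together with $(1-\zeta_*)\mathbb{Z}^{2g}$, generate $\mathbb{Z}^{2g}$ (a good sanity check is that for $\dim\overline{X}=2$ this recovers the known simple connectedness of these surfaces, such as the cubic of Example~\ref{ex:E}). As a warm-up one can settle the prime-to-$p$ torsion cheaply: the norm of the degree-$p$ map $\pi$ satisfies $N\circ\pi^*=[p]$ on $\mathrm{Pic}(\overline{X})$, so the prime-to-$p$ torsion injects into the torsion of $\mathrm{Pic}(\overline{Y'})^{\langle\zeta\rangle}$, which by linear independence of the exceptional divisors and $\mathrm{Cl}(\overline{Y'})=\sigma^*\mathrm{Pic}(\overline{A})\oplus\bigoplus_j\mathbb{Z}E_j$ lands in $\mathrm{Pic}^0(\overline{A})^{\langle\zeta\rangle}$, a group of order $\#T=p^{2g/(p-1)}$; the $p$-primary part, however, genuinely needs the fundamental group computation.
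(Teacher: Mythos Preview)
Your argument is correct and in fact proves slightly more than the paper does: you obtain $\pi_1(\overline{X})=1$, whereas the paper only needs (and only obtains) the vanishing of its abelianization. The two routes, however, are genuinely different in presentation.

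The paper never leaves algebraic geometry. It first computes $\mathrm{Pic}(\overline{X}_0)_{\mathrm{tors}}\cong \widehat{T}\times\mathbb{Z}/p$ via the Hochschild--Serre spectral sequence for the $\mathbb{Z}/p$-torsor $\overline{A}_0\to\overline{X}_0$ together with the sequence $0\to A^t(\overline{k})\to\mathrm{Pic}(\overline{A})\to\mathrm{NS}(\overline{A})\to 0$; by duality this matches your $\pi_1(\overline{X}_0)^{\mathrm{ab}}\cong T\times\mathbb{Z}/p$. It then observes that the kernel of $\mathrm{Pic}(\overline{X})\to\mathrm{Pic}(\overline{X}_0)$ is torsion-free, so only $p$-torsion needs ruling out, and does this by listing (via Goursat's lemma) all index-$p$ subgroups $\mathcal{H}\subset\mathcal{G}=T\times\mathbb{Z}/p$, writing down the corresponding cover $\overline{A}_1/\mathcal{H}\to\overline{X}_0$ explicitly, and checking in each case that it ramifies along some component of the exceptional divisor. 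Your van Kampen computation is the topological mirror of this last step: killing the local loops $\delta_t$ is precisely the statement that no abelian cover of $\overline{X}_0$ extends unramified across every $D_t$, and your identification $\overline{v_t}=t$ in $T$ is exactly what guarantees that the local monodromies at the various fixed points span all of $\mathcal{G}$.

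What each approach buys: the paper's argument is purely algebraic and makes the relevant covers completely explicit, at the cost of a three-case analysis; your approach is uniform, gives simple connectedness for free, but requires passing to $\mathbb{C}$, van Kampen, and the (correct but worth stating carefully) fact that a tubular neighbourhood of the toric exceptional fibre $D_t$ is simply connected --- this follows because the smooth toric resolution $\tilde V$ of $\mathbb{C}^g/M$ deformation-retracts onto its central fibre and has $\pi_1(\tilde V)=1$. Your ``hard step'' is indeed the crux, and your universal-cover sketch is right: the deck transformation of $H_1(\overline{A},\mathbb{R})\to\overline{X}_0$ fixing a lift $\tau_t$ of $t$ is the affine map $w\mapsto\zeta_* w+(1-\zeta_*)\tau_t$, and under the snake-lemma isomorphism $\Lambda/(1-\zeta_*)\Lambda\cong T$ the class of $(1-\zeta_*)\tau_t$ is exactly $t$. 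The warm-up paragraph at the end is a nice sanity check but is not needed for the main argument.
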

\begin{proof}
We take the commutative group scheme $\mathcal{G} = T \times \bZ /p$, which acts on $A$; where the elements of $T$ act as translation and the generator of $\bZ/p$ acts as $\zeta$. We let $A_1 = A\backslash A[(1 - \zeta)^2]$ and note that $\mathcal{G}$ acts freely on $A_1$. Furthermore, the quotient $A_1/\mathcal{G}$ is exactly $X_0$. This is because $$A_1/\mathcal{G} = (A\backslash A[(1 - \zeta)^2])/(A[1 - \zeta] \times \bZ/p) = (A\backslash A[1 - \zeta])/\bZ/p = X_0.$$ Using this, we get an injection $\hat{\mathcal{G}} \to \mathrm{Pic}(\overline{X}_0)$ using \cite{SkorobogatovTorsors}, where $\hat{\mathcal{G}}$ is the Cartier dual of $\mathcal{G}$. Now, we look to argue that $\mathrm{Pic}(\overline{X}_0)_{\mathrm{tors}}$ has the same order as $\hat{\mathcal{G}}$, so that we can see that $\mathrm{Pic}(\overline{X}_0)_{\mathrm{tors}} \cong \hat{\mathcal{G}}$ as abelian groups.

We first note that $\overline{A}_0 \to \overline{X}_0$ is a $\bZ/p$-torsor, since we have removed all the fixed points of $\zeta$ from $A$. Therefore, we can apply Hochschild-Serre spectral sequence $$\mathrm{H}^p(\bZ/p, \mathrm{H}^q_{\et}(\overline{A}_0, \mathbb{G}_m)) \Rightarrow \mathrm{H}^{p+q}(\overline{X}_0, \mathbb{G}_m),$$ which gives us the exact sequence \begin{equation}0 \to \bZ/p \to \mathrm{Pic}(\overline{X}_0) \to \mathrm{Pic}(\overline{A}_0)^{\zeta} \to 0, \label{eq:1}
\end{equation}
 where the last term is the group of elements of $\mathrm{Pic}(\overline{A}_0)$ fixed by the induced action of $\zeta$. The first term is $\mathrm{H}^1(\bZ/p, \overline{k}^*) = \bZ/p$, and the last zero is $\mathrm{H}^2(\bZ/p, \overline{k}^*) = \overline{k}^*/\overline{k}^{*p} = 0$.

The other piece we will need is the short exact sequence $$ 0 \to A^t(\overline{k}) \to \mathrm{Pic}(\overline{A}) \to\mathrm{NS}(\overline{A}) \to 0. $$The automorphism $\zeta$ on $A$ induces an action of $\zeta$ on the above abelian groups as well. So taking the long exact sequence associated to group cohomology gives us \begin{equation} \label{eq:2} 0 \to A^t[1 - \zeta] \to \mathrm{Pic}(\overline{A})^{\zeta} \to \mathrm{NS}(\overline{A})^\zeta \to 0\end{equation} where the zero at the end comes from $\mathrm{H}^1(\langle \zeta \rangle, A^t(\overline{k}))$, which we can calculate using Tate cohomology of cyclic groups as $$\ker(\mathrm{Norm})/A^t(\overline{k})I_{\langle \zeta \rangle} = A^t(\overline{k})/(1 - \zeta)A^t(\overline{k}) = 0$$

Equations \ref{eq:1} and \ref{eq:2} come together to give us the commutative diagram $$\begin{tikzcd}
            &                                  & 0                                                                & 0                                                    &   \\
            &                                  & \mathrm{NS}(\overline{A})^\zeta \arrow[u] \arrow[equal]{r}                              & \mathrm{NS}(\overline{A})^\zeta \arrow[u]                  &   \\
0 \arrow[r] & \mathbb{Z}/p \arrow[r]           & \mathrm{Pic}(\overline{X}_0) \arrow[r] \arrow[u]                 & \mathrm{Pic}(\overline{A})^\zeta \arrow[r] \arrow[u] & 0 \\
0 \arrow[r] & \mathbb{Z}/p \arrow[equal]{u} \arrow[r] & \mathrm{Pic}(\overline{X}_0)_{\mathrm{tors}} \arrow[r] \arrow[u] & T^t \arrow[r] \arrow[u]                              & 0 \\
            &                                  & 0 \arrow[u]                                                      & 0 \arrow[u]                                          &  
\end{tikzcd} $$ where the rows and columns are exact. Therefore, looking at the bottom row, we see that the order of $\mathrm{Pic}(\overline{X}_0)_{\mathrm{tors}}$ is exactly $T^t \times \bZ/p$, so the earlier injection tells us that $\mathrm{Pic}(\overline{X}_0)_{\mathrm{tors}} \cong T^t \times \bZ/p$.

The kernel of the map $\mathrm{Pic}(\overline{X}) \to \mathrm{Pic}(\overline{X}_0)$ is generated by the irreducible components of the exceptional divisor, which we showed is torsion free, so this map is an injective map on torsion subgroups. Therefore, to show that $\mathrm{Pic}(\overline{X})$ is torsion-free, we only need to show that it has no $p$-torsion. This $p$-torsion corresponds to connected unramified $p$-coverings of $\overline{X}$. A normal finite surjective covering of $\overline{X}$ is determined by its restriction to $\overline{X}_0$. So we look to show that any connected unramified $p$-covering of $\overline{X}_0$ extends to a ramified covering of $\overline{X}$. But since $\mathrm{Pic}(\overline{X}_0)_\mathrm{tors} \cong T \times \bZ/p$, all connected unramified $p$-coverings of $\overline{X}_0$ must come from an index $p$ subgroup $\mathcal{H}$ of $\mathcal{G}$, giving us the covering $\overline{A}_1/\mathcal{H}$ of $\overline{X}_0$.

As abstract groups, $T$ is a subgroup of $A[p] \cong (\bZ/p)^{2g}$, so it has the form $(\bZ/p)^b$ for some $b$. Therefore, by Goursat's lemma, we can separate index $p$ subgroups of $(\bZ/p)^b \times \bZ/p$ into three cases. In the first case, we take the subgroup $\mathcal{H} = T$. Then, $\overline{A}_1 / T$ is $\overline{A} \backslash T = \overline{A}_0$. Therefore, the unramified covering $\overline{A}_0 \to \overline{X}_0$ extends to the covering $\overline{A}' \to \overline{X}$ which is ramified at the exceptional divisors. 

The other two cases only occur if $T$ is non-trivial, so we assume that. These two cases occur when the subgroup surjects onto the second factor, and we have two cases depending on whether this surjection splits. If this is split, then we take a homomorphism $\phi : T \to \bZ/p$ and take the index $p$ subgroup $\mathcal{H}$ of $T \times \bZ/p$ to be the kernel of $(x,y) \mapsto \phi(x)$. In this case, take $A_\phi = \overline{A}/\ker(\phi)$ and remove the image of $A[(1 -\zeta)^2]$ in $A_\phi$. Then we have that $\overline{A}_1/ \mathcal{H} = A_\phi/\langle \zeta \rangle$, so the unramified cover $\overline{A}_1 /\mathcal{H} \to \overline{X}_0$ extends to the covering of $\overline{A}/\mathcal{H} \to \overline{X}$ ramified at $\sigma^{-1}(T \backslash \ker{\phi})$.

For the third case, we again take a homomorphism $\phi : T \to \bZ/p$, but this time take the index $p$ subgroup of $T \times \bZ/p$ to be the kernel of $(x,y) \mapsto \phi(x) - y$. Again, we let $A_\phi = \overline{A}/\ker{\phi}$ and remove $A[(1 - \zeta)^2]$. Here we need to choose an $a \in A[1 - \zeta]$ such that $\phi(a) \neq 0$. By abuse of notation, let $\phi$ also denote the map from $A \to A_\phi$, and note that $\zeta$ descends to an automorphism of $A_\phi$. Then, $\overline{A}_1/\mathcal{H}$ is the quotient of $A_\phi$ by $x \mapsto \phi(a) + \zeta x$ and compositions of this map, and so the unramified covering $\mathcal{A}_1/\mathcal{H} \to \overline{X}_0$ is the restriction of the covering of $\overline{X}$ ramified in $\sigma^{-1}(\ker(\phi))$.
\end{proof}

\begin{proposition}
\label{prop:finiteKernel}
The composition of maps $$\mathrm{Br}(\overline{X}) \rightarrow \mathrm{Br}(\overline{Y''}) \xrightarrow{\sim} \mathrm{Br}(\overline{Y})
$$ has finite kernel.
\end{proposition}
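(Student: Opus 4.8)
The plan is to reduce the claim to a Hochschild--Serre computation on the open locus where the $\bZ/p$-action is free, following the strategy of \cite{SkorobogatovZarhinKummer}. Write $\overline{Y}_0 = \overline{Y}\setminus T$; since $Y$ is a torsor under $A$ we have $\overline{Y}_0 \cong \overline{A}_0$, and the previous proposition records that $\overline{A}_0 \to \overline{X}_0$ is a $\bZ/p$-torsor, whose structure map I denote $\pi_0$. First I would observe that the isomorphism $\mathrm{Br}(\overline{Y''}) \xrightarrow{\sim} \mathrm{Br}(\overline{Y})$ of the statement is the inverse of pullback along the proper birational morphism $\overline{Y''}\to\overline{Y}$ of smooth proper varieties over $\overline{k}$; this pullback is an isomorphism because the Brauer group of a smooth proper variety over a field of characteristic zero is a birational invariant. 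Next, since $\overline{X}$ and $\overline{Y}$ are regular and $\overline{X}_0 \subseteq \overline{X}$, $\overline{A}_0 \cong \overline{Y}_0 \subseteq \overline{Y}$ are dense open, the restriction maps $\mathrm{Br}(\overline{X}) \to \mathrm{Br}(\overline{X}_0)$ and $\mathrm{Br}(\overline{Y}) \to \mathrm{Br}(\overline{A}_0)$ are injective. A diagram chase of restriction maps --- using that the birational-invariance isomorphism restricts to the identity on $\mathrm{Br}(\overline{A}_0)$, and that the composite $\overline{A}_0 \hookrightarrow \overline{Y''} \to \overline{X}$ agrees with $\overline{A}_0 \xrightarrow{\pi_0} \overline{X}_0 \hookrightarrow \overline{X}$ --- shows that the composite of the statement, followed by the injection $\mathrm{Br}(\overline{Y}) \hookrightarrow \mathrm{Br}(\overline{A}_0)$, equals the restriction $\mathrm{Br}(\overline{X}) \hookrightarrow \mathrm{Br}(\overline{X}_0)$ followed by $\pi_0^*$. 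Hence the kernel of the composite is the preimage under $\mathrm{Br}(\overline{X}) \hookrightarrow \mathrm{Br}(\overline{X}_0)$ of $\ker\!\big(\pi_0^*\colon \mathrm{Br}(\overline{X}_0)\to\mathrm{Br}(\overline{A}_0)\big)$, so it is enough to show this last kernel is finite.

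To that end I would extend the Hochschild--Serre spectral sequence $\mathrm{H}^p(\bZ/p, \mathrm{H}^q_{\et}(\overline{A}_0, \mathbb{G}_m)) \Rightarrow \mathrm{H}^{p+q}(\overline{X}_0, \mathbb{G}_m)$ of the previous proposition into total degree $2$. The map $\pi_0^*$ is the edge homomorphism $\mathrm{Br}(\overline{X}_0) \to \mathrm{Br}(\overline{A}_0)^{\zeta}$ followed by the inclusion $\mathrm{Br}(\overline{A}_0)^\zeta \hookrightarrow \mathrm{Br}(\overline{A}_0)$, so $\ker\pi_0^*$ is the first step $F^1$ of the Hochschild--Serre filtration on $\mathrm{Br}(\overline{X}_0) = \mathrm{H}^2(\overline{X}_0, \mathbb{G}_m)$, which is an extension of a subquotient of $E_2^{1,1} = \mathrm{H}^1(\bZ/p, \mathrm{Pic}(\overline{A}_0))$ by a subquotient of $E_2^{2,0} = \mathrm{H}^2(\bZ/p, \overline{k}^*)$. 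The group $\mathrm{H}^2(\bZ/p, \overline{k}^*) = \overline{k}^*/\overline{k}^{*p}$ is trivial, as noted in the previous proposition. For the remaining term, $\mathrm{Pic}(\overline{A}_0) = \mathrm{Pic}(\overline{A})$ since $T$ has codimension $g \geq 2$ in $\overline{A}$, and the exact sequence $0 \to A^t(\overline{k}) \to \mathrm{Pic}(\overline{A}) \to \mathrm{NS}(\overline{A}) \to 0$ yields exactness of $\mathrm{H}^1(\bZ/p, A^t(\overline{k})) \to \mathrm{H}^1(\bZ/p, \mathrm{Pic}(\overline{A})) \to \mathrm{H}^1(\bZ/p, \mathrm{NS}(\overline{A}))$; the first term vanishes by the previous proposition (using that $1 - \zeta$ is an isogeny on $A^t$), and the last is finite because $\mathrm{NS}(\overline{A})$ is finitely generated. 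Therefore $\mathrm{H}^1(\bZ/p, \mathrm{Pic}(\overline{A}_0))$ is finite, hence so is $\ker\pi_0^*$, and the proposition follows.

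Most of this is bookkeeping: beyond what the previous proposition already establishes, the only new spectral-sequence input is the computation one total degree higher, which is immediate from the finiteness of $\mathrm{NS}(\overline{A})$ and the vanishing of $\mathrm{H}^2(\bZ/p, \overline{k}^*)$. The step I expect to need the most care is the reduction in the first paragraph --- checking that the birational-invariance isomorphism $\mathrm{Br}(\overline{Y''}) \cong \mathrm{Br}(\overline{Y})$ is compatible with restriction to $\overline{A}_0$ and that the resulting square of restriction maps commutes, so that the kernel of the composite is genuinely identified with the preimage of $\ker\pi_0^*$. Each ingredient there (injectivity of $\mathrm{Br}$ of a regular variety into $\mathrm{Br}$ of a dense open, invariance of $\mathrm{Pic}$ under removing a codimension $\geq 2$ locus, birational invariance of $\mathrm{Br}$ for smooth proper varieties) is standard, but the commutativity of the square should be written out explicitly.
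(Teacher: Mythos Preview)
Your argument is correct and follows essentially the same route as the paper: reduce to the open locus, then bound $\ker\!\big(\mathrm{Br}(\overline{X}_0) \to \mathrm{Br}(\overline{A}_0)\big)$ by $\mathrm{H}^1(\bZ/p, \mathrm{Pic}(\overline{A}))$ via Hochschild--Serre and conclude using the $A^t/\mathrm{NS}$ filtration on $\mathrm{Pic}(\overline{A})$. The only cosmetic difference is that the paper invokes simple connectedness of the exceptional components $D_i$ (via Grothendieck's purity sequence) to upgrade the injection $\mathrm{Br}(\overline{X}) \hookrightarrow \mathrm{Br}(\overline{X}_0)$ to an isomorphism, whereas you correctly observe that mere injectivity already suffices for the kernel computation.
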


\begin{proof}
Note that $\mathrm{Br}(\overline{Y''}) \cong \mathrm{Br}(\overline{Y})$ due to the birational invariance of the Brauer group for smooth projective varieties. We first see that Grothendieck's exact sequence gives us $$0 \rightarrow \mathrm{Br}(\overline{X}) \rightarrow \mathrm{Br}(\overline{X}_0) \rightarrow \bigoplus \mathrm{H}^1(D_i, \bQ/ \bZ),$$ where the terms in the direct sum run over all irreducible divisors in $X \backslash X_0$. These last terms are zero since each $D_i$ is simply connected, so $\mathrm{Br}(\overline{X}) \cong \mathrm{Br}(\overline{X}_0)$, and since $T$ has codimension at least 2 in $Y$, we also get an isomorphism $\mathrm{Br}(\overline{Y}) \cong \mathrm{Br}(\overline{Y}_0)$. Therefore it suffices to analyze the map $\mathrm{Br}(\overline{X}_0) \to \mathrm{Br}(\overline{Y}_0)$, which we can do using the Hochschild-Serre spectral sequence. In particular, we get an exact sequence $$0 \to \ker\left(\mathrm{Br}(\overline{X}_0) \to \mathrm{Br}(\overline{A}_0)^\zeta \right) \to \mathrm{H}^1(\langle \zeta \rangle, \mathrm{Pic}(\overline{A}_0)).$$ Since $\mathrm{Pic}(\overline{A}_0) \cong \mathrm{Pic}(\overline{A})$, we can analyze $\mathrm{H}^1(\langle \zeta \rangle, \mathrm{Pic}(\overline{A}))$ instead. We again use the short exact sequence $$0 \to A^t(\overline{k}) \to \mathrm{Pic}(\overline{A}) \to \mathrm{NS}(\overline{A}) \to 0,$$ and again the action of $\zeta$ gives us an exact sequence $$\mathrm{H}^1(\langle \zeta \rangle, A^t(\overline{k})) \to \mathrm{H}^1(\langle \zeta \rangle, \mathrm{Pic}(\overline{A})) \to \mathrm{H}^1(\langle \zeta \rangle, \mathrm{NS}(\overline{A})).$$ But since we saw earlier that $\mathrm{H}^1(\langle \zeta \rangle, A^t(\overline{k}))$ is trivial, it suffices to show that $\mathrm{H}^1(\langle \zeta \rangle, \mathrm{NS}(\overline{A}))$ is finite. However, this is just the group cohomology of a finite group acting on a finitely generated group and so must be finite. This tells us that $\ker\left(\mathrm{Br}(\overline{X}_0) \to \mathrm{Br}(\overline{A}_0)^\zeta \right)$ is finite, so the map $\mathrm{Br}(\overline{X}) \to \mathrm{Br}(\overline{A})$ is a Galois equivariant map with finite kernel.
\end{proof}

\begin{corollary}
The group $\mathrm{Br}(X)/\mathrm{Br}_0(X)$ is finite.
\end{corollary}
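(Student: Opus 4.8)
The plan is to exploit the standard filtration $\mathrm{Br}_0(X) \subseteq \mathrm{Br}_1(X) \subseteq \mathrm{Br}(X)$ and to prove finiteness of the two successive quotients $\mathrm{Br}_1(X)/\mathrm{Br}_0(X)$ and $\mathrm{Br}(X)/\mathrm{Br}_1(X)$ separately; their finiteness together with the fact that $\mathrm{Br}(X)/\mathrm{Br}_0(X)$ is an extension of the latter by the former gives the claim.

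For $\mathrm{Br}_1(X)/\mathrm{Br}_0(X)$, I would invoke the Hochschild--Serre spectral sequence $\mathrm{H}^p(k, \mathrm{H}^q_{\et}(\overline{X}, \mathbb{G}_m)) \Rightarrow \mathrm{H}^{p+q}_{\et}(X, \mathbb{G}_m)$, whose low-degree terms give an injection $\mathrm{Br}_1(X)/\mathrm{Br}_0(X) \hookrightarrow \mathrm{H}^1(k, \mathrm{Pic}(\overline{X}))$. The key input is that $\mathrm{Pic}(\overline{X})$ is finitely generated: we have shown above that it is torsion-free, so $\mathrm{Pic}^0(\overline{X})(\overline{k})$ is torsion-free, which forces $\mathrm{Pic}^0(\overline{X}) = 0$ since a nonzero abelian variety has nontrivial torsion points; hence $\mathrm{Pic}(\overline{X}) = \mathrm{NS}(\overline{X})$ is finitely generated, and in fact free. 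Then $\mathrm{H}^1(k, \mathrm{Pic}(\overline{X}))$ is finite by a routine argument: the Galois action factors through a finite quotient $\mathrm{Gal}(L/k)$, inflation--restriction identifies $\mathrm{H}^1(k, \mathrm{Pic}(\overline{X}))$ with $\mathrm{H}^1(\mathrm{Gal}(L/k), \mathrm{Pic}(\overline{X}))$ because $\mathrm{Hom}_{\mathrm{cont}}(\mathrm{Gal}_L, \mathrm{Pic}(\overline{X})) = 0$ for a torsion-free finitely generated module, and the cohomology of a finite group with finitely generated coefficients is finite.

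For $\mathrm{Br}(X)/\mathrm{Br}_1(X)$, the same spectral sequence gives an injection into $\mathrm{Br}(\overline{X})^{\mathrm{Gal}_k}$, so it suffices to bound this group. Here I would use Proposition~\ref{prop:finiteKernel}: composing with the birational-invariance isomorphisms $\mathrm{Br}(\overline{X}) \to \mathrm{Br}(\overline{Y''}) \xrightarrow{\sim} \mathrm{Br}(\overline{Y}) \cong \mathrm{Br}(\overline{A})$ produces a Galois-equivariant homomorphism $\mathrm{Br}(\overline{X}) \to \mathrm{Br}(\overline{A})$ with finite kernel $N$; one should check that the identification $\mathrm{Br}(\overline{Y}) \cong \mathrm{Br}(\overline{A})$ is Galois-equivariant, which holds because $\overline{Y} \cong \overline{A}$ over $\overline{k}$ and translations act trivially on $\mathrm{Br}(\overline{A})$, so the comparison is canonical. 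Taking $\mathrm{Gal}_k$-invariants then exhibits $\mathrm{Br}(\overline{X})^{\mathrm{Gal}_k}$ as an extension of a subgroup of $\mathrm{Br}(\overline{A})^{\mathrm{Gal}_k}$ by the finite group $N^{\mathrm{Gal}_k}$, reducing the finiteness to that of $\mathrm{Br}(\overline{A})^{\mathrm{Gal}_k}$.

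The main obstacle is precisely this last reduction: the finiteness of $\mathrm{Br}(\overline{A})^{\mathrm{Gal}_k}$ for an abelian variety $A$ over a number field is the deep finiteness theorem of Skorobogatov and Zarhin for Brauer groups of abelian varieties over finitely generated fields of characteristic zero, which I would quote rather than reprove. Everything else — the spectral sequence bookkeeping and the reductions above — is formal once the two preceding propositions are in hand, the only minor care being the Galois-equivariance of the torsor-to-abelian-variety comparison of Brauer groups.
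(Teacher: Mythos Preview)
Your proposal is correct and follows essentially the same route as the paper: both split the problem via the filtration $\mathrm{Br}_0 \subseteq \mathrm{Br}_1 \subseteq \mathrm{Br}$, use the Hochschild--Serre spectral sequence to control $\mathrm{Br}_1/\mathrm{Br}_0$ via $\mathrm{H}^1(k,\mathrm{Pic}(\overline{X}))$ together with the torsion-freeness of $\mathrm{Pic}(\overline{X})$, and then bound $\mathrm{Br}/\mathrm{Br}_1 \hookrightarrow \mathrm{Br}(\overline{X})^{G_k}$ using Proposition~\ref{prop:finiteKernel} and the Skorobogatov--Zarhin finiteness theorem for $\mathrm{Br}(\overline{A})^{G_k}$. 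You supply a bit more detail than the paper (why torsion-freeness forces $\mathrm{Pic}(\overline{X})$ to be finitely generated, and the Galois-equivariance of the $\overline{Y}\cong\overline{A}$ comparison), but the skeleton and the key inputs match exactly.
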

\begin{proof}
\label{cor:finiteBrauer}
We note that the spectral sequence $\mathrm{H}^p(k, \mathrm{H}^q_\et(\overline{X}, \mathbb{G}_m)) \Rightarrow \mathrm{H}^{p + q}_\et (X, \mathbb{G}_m)$ tells us that $\mathrm{Br}_1(X)/\mathrm{Br}_0(X) \cong \mathrm{H}^1(k, \mathrm{Pic}(\overline{X}))$, which is finite since $\mathrm{Pic}(\overline{X})$ is torsion free and thus finitely generated. Therefore, to show the result, we just need to show that $\mathrm{Br}(X)/\mathrm{Br}_1(X)$ is finite. The higher degree terms of the spectral sequence tell us that $\mathrm{Br}(X)/\mathrm{Br}_1(X)$ injects into $\mathrm{Br}(\overline{X})^{G_k}$. Proposition \ref{prop:finiteKernel} tells us that the map $\mathrm{Br}(\overline{X}) \to \mathrm{Br}(\overline{A})$ is Galois equivariant and has finite kernel. So, the map $\mathrm{Br}(\overline{X})^{G_k} \to \mathrm{Br}(\overline{A})^{G_k}$ also has finite kernel and the codomain is finite by \cite{SkorobogatovZarhinFinite}, which tells us that $\mathrm{Br}(\overline{X})^{G_k}$ is finite.
\end{proof}

\section{Brauer-Manin Obstruction}

\begin{proposition}
\label{prop:decomposition}
For any integer $n > 1$ such that $n \equiv 1 \pmod{p}$, we have a decomposition $$\mathrm{H}^2_\et(Y, \mu_n) = \mathrm{H}^2_\et(k, \mu_n) \oplus \mathrm{H}^1(k, \mathrm{H}^1_\et(\overline{Y}, \mu_n)) \oplus \mathrm{H}^2(\overline{Y}, \mu_n)^{G_k}$$ that respects the action of $\zeta$. Note that $\zeta$ acts trivially on $\mathrm{H}_\et^2(k, \mu_n)$ and non-trivially on the other two summands.
\end{proposition}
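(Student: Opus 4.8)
The plan is to transport the classical decomposition of $\mathrm{H}^{2}_\et(A,\mu_n)$ for the abelian variety $A$ itself to $Y$ along a $p$-covering map $\pi\colon Y\to A$; the whole point is that $n\equiv 1\pmod p$ forces $\gcd(n,p)=1$, so such a $\pi$ induces isomorphisms on $\mu_n$-cohomology. First I would produce $\pi$: since $[Y]\in\mathrm{H}^1(k,A)$ is the image of a class in $\mathrm{H}^1(k,T)$ and $\mathrm{H}^1(k,T)\to\mathrm{H}^1(k,A)$ factors through $\mathrm{H}^1(k,A[p])$, the class $[Y]$ lifts to a class $\xi\in\mathrm{H}^1(k,A[p])$ represented by a cocycle valued in $T$. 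Twisting the exact sequence $0\to A[p]\to A\xrightarrow{[p]}A\to 0$ by $\xi$ produces a $k$-morphism $\pi\colon Y\to A$ that is an $A[p]$-torsor over $A$ and that, over $\overline k$, becomes $[p]\colon\overline A\to\overline A$ up to a translation. Because the twisting cocycle takes values in $T=A[1-\zeta]$, on which the group automorphism $\zeta$ acts trivially (as $T=\ker(1-\zeta)$), $\zeta$ descends to the twist $Y$, recovering the automorphism already fixed on $Y$, and $\pi$ is $\zeta$-equivariant --- which is exactly why the paper restricts to torsors coming from $\mathrm{H}^1(k,T)$.

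Next I would compute $\overline\pi^{*}$ on geometric cohomology. The $k$-morphism $\pi$ gives a $G_k$-equivariant map $\overline\pi^{*}\colon\mathrm{H}^{q}_\et(\overline A,\mu_n)\to\mathrm{H}^{q}_\et(\overline Y,\mu_n)$; after choosing a base point to identify $\overline Y$ with $\overline A$, under which $\overline\pi$ is $[p]$ composed with a translation, and using that translations act trivially on the cohomology of an abelian variety, $\overline\pi^{*}$ is multiplication by $p^{q}$, hence an isomorphism since $p$ is a unit in $\bZ/n$. Therefore the induced map $E_r^{p,q}(A)\to E_r^{p,q}(Y)$ of Hochschild-Serre spectral sequences is an isomorphism on $E_2$ (there it is $\mathrm{H}^{p}(k,\overline\pi^{*})$), hence on every page, so each differential of the $Y$-spectral sequence is conjugate to one of the $A$-spectral sequence. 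For the abelian variety $A$ it is classical that this spectral sequence degenerates at $E_2$ and that the filtration on $\mathrm{H}^{n}_\et(A,\mu_n)$ splits, via a splitting built from the zero section and the multiplication maps; hence the $Y$-spectral sequence degenerates as well, and $\pi^{*}\colon\mathrm{H}^{2}_\et(A,\mu_n)\to\mathrm{H}^{2}_\et(Y,\mu_n)$ --- a map of groups carrying a finite filtration which is an isomorphism on the associated graded --- is itself an isomorphism. Composing the decomposition of $\mathrm{H}^{2}_\et(A,\mu_n)$ with $\pi^{*}$ and with $\overline\pi^{*}$ in each bidegree yields $\mathrm{H}^{2}_\et(Y,\mu_n)=\mathrm{H}^{2}_\et(k,\mu_n)\oplus\mathrm{H}^1(k,\mathrm{H}^1_\et(\overline Y,\mu_n))\oplus\mathrm{H}^2(\overline Y,\mu_n)^{G_k}$.

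It remains to check the equivariance and the final remark. Since $\zeta$ fixes the origin and, being a group automorphism, commutes with the multiplication maps, the chosen splitting of $\mathrm{H}^{2}_\et(A,\mu_n)$ is $\zeta$-stable; with the $\zeta$-equivariance of $\pi$ this makes the transported decomposition of $\mathrm{H}^{2}_\et(Y,\mu_n)$ into $\zeta$-submodules. The action on $\mathrm{H}^{2}_\et(k,\mu_n)$ is trivial because $\zeta$ commutes with $Y\to\mathrm{Spec}\,k$; and on $\mathrm{H}^{q}_\et(\overline Y,\mu_n)$ for $q\ge 1$ it is nontrivial because $\gcd(n,p)=1$ gives $T\cap A[n]=0$, so $1-\zeta$ is an automorphism of $A[n]$, forcing $\zeta\neq\mathrm{id}$ on $A[n]$ and hence on $\mathrm{H}^{1}_\et(\overline A,\bZ/n)$, while on $\mathrm{H}^{2}_\et(\overline A,\bZ/n)\cong\wedge^{2}\mathrm{H}^{1}_\et(\overline A,\bZ/n)$ nontriviality follows since an order-$p$ automorphism with $\wedge^{2}=\mathrm{id}$ would have all eigenvalues $\pm 1$, impossible for $p>2$. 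The one genuinely essential input is the interplay of the $p$-covering $\pi$ with $\gcd(n,p)=1$: without a morphism relating $Y$ to $A$ one would be forced to prove $d_2\colon E_2^{0,2}\to E_2^{2,1}$ vanishes for $Y$ by hand, and the obvious self-map ``multiplication by $n$'' of $Y$ acts by $n^{q}\equiv 0$ on both source and target and gives no information there. The remaining points --- the $\zeta$-equivariance of $\pi$ and the citation of the decomposition for abelian varieties --- are routine.
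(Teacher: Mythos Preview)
Your argument is correct and takes a genuinely different route from the paper. The paper works \emph{intrinsically on $Y$}: it builds the retraction of $\alpha\colon\mathrm{H}^2(k,\mu_n)\to\mathrm{H}^2_\et(Y,\mu_n)$ by restricting to the fixed scheme $T\subset Y$, summing corestrictions $\mathrm{H}^2(k_i,\mu_n)\to\mathrm{H}^2(k,\mu_n)$, and multiplying by an inverse of $|T|\bmod n$ (this is where $|T|=p^b$ coprime to $n$ is used); and it builds the section of $\beta\colon\mathrm{H}^2_\et(Y,\mu_n)\to\mathrm{H}^2_\et(\overline Y,\mu_n)^{G_k}$ by cupping with $\wedge^2\mathcal{T}_n$, where $\mathcal{T}_n$ is the $[n]$-torsor of $Y$ (defined over $k$ because $[Y]\in\mathrm{H}^1(k,A)[p]$ and $n\equiv 1\pmod p$). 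You instead produce a $\zeta$-equivariant $p$-covering $\pi\colon Y\to A$, observe that $\overline\pi^*$ is multiplication by $p^q$ on $\mathrm{H}^q_\et(\overline A,\mu_n)$ hence an isomorphism, deduce that $\pi^*$ is an isomorphism on $\mathrm{H}^2_\et(-,\mu_n)$ by comparing Hochschild--Serre spectral sequences, and then transport the known decomposition for $A$. Both arguments exploit $\gcd(n,p)=1$, but at different points: the paper uses it to invert $|T|$ and to have $[n]$ on $Y$, whereas you use it to invert $\overline\pi^*$.

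What each approach buys: your argument is shorter and more conceptual, and makes the $\zeta$-equivariance of the splitting transparent (since $\zeta$ is a group endomorphism of $A$, it commutes with the zero section and with $[n]$). The paper's construction, however, is tailored to the next step: in Corollary~\ref{cor:image} one must show that the element $a\in\mathrm{H}^2(k,\mu_n)$ extracted from $\sigma_{\xi*}\pi_\xi^*(x)$ is independent of the cyclic twist $\xi$, and the paper's retraction is \emph{by construction} the composite ``restrict to $T$, corestrict, multiply by $m$''. Since $T$ and the maps $D''\to D$ are twist-invariant, independence of $\xi$ follows from a diagram chase. Your retraction, by contrast, is $0^*\circ(\pi^*)^{-1}$, and under a twist $\xi$ the covering becomes $\pi_\xi\colon Y_\xi\to A_\xi$ with $A_\xi$ the $\zeta$-twist of $A$; one would then need a separate argument that the resulting retractions for varying $\xi$ agree, so the paper's explicit formula is doing real work downstream.
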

\begin{proof}
We have maps $\alpha: \mathrm{H}^2_\et(k, \mu_n) \to \mathrm{H}^2_\et(Y, \mu_n)$ and $\beta: \mathrm{H}^2_\et(Y, \mu_n) \to \mathrm{H}^2_\et(\overline{Y}, \mu_n)^{G_k}$ coming from the maps $k \to Y$ and $\overline{Y} \to Y$. To prove the splitting of $\mathrm{H}^2_\et(Y,\mu_n)$, we construct a retraction of $\alpha$ and a section of $\beta$. 

To get a retraction of $\alpha$, we use the fact that $T$ is zero-dimensional. This means we can write it as $\bigcup_{i = 1}^r \mathrm{Spec}(k_i)$ for some finite extensions $k_i/k$. Now, we have the restriction map $$\mathrm{H}^2_\et(Y, \mu_n) \to \mathrm{H}^2_\et(T, \mu_n) = \bigoplus_{i = 1}^t \mathrm{H}^2(k_i, \mu_n).$$ For each $i$, we have the restriction map $\mathrm{H}^2(k, \mu_n) \to \mathrm{H}^2(k_i, \mu_n)$ and the corestriction map $\mathrm{H}^2(k_i, \mu_n) \to \mathrm{H}^2(k, \mu_n)$ whose composition is multiplication by $[k_i:k]$. We can take the sum of these corestriction maps to get a map $\mathrm{H}^2_\et(T, \mu_n) \to \mathrm{H}^2(k, \mu_n)$. The composition of the map $\mathrm{H}^2(k, \mu_n) \to \mathrm{H}^2_\et(T, \mu_n)$ with this corestriction map is multiplication by $t$ since the sum of the degrees $[k_i:k]$ is exactly the number of points in $T(\overline{k})$. Since $T$ is contained in $A[p]$, its order must be a power of $p$. This means we can find an integer $m$ such that $tm \equiv 1 \pmod n$, since $n$ is assumed to be coprime to $p$. Therefore, we can look at $$\mathrm{H}^2_\et(Y, \mu_n) \to \mathrm{H}^2_\et(T, \mu_n) \to \mathrm{H}^2(k, \mu_n) \to \mathrm{H}^2(k, \mu_n),$$ where the last map is multiplication by $m$, and note that the composition of all these maps is a retraction of $\alpha$, since the elements in $\mathrm{H}^2(k, \mu_n)$ have order dividing $n$.

To construct a section of $\beta$, first we note that since $Y$ is in the image of $\mathrm{H}^1(k,T) \to \mathrm{H}^1(k,A)$, it lies inside $\mathrm{H}^1(k,A)[1-\zeta]$ using the cohomology of the short exact sequence $0 \to T \to A \to A \to 0 $. Then, $Y$ is also in $\mathrm{H}^1(k,A)[p]$ and since $n$ is $1 \pmod{p}$, we get a morphism $[n]: Y \to Y$ defined over $k$ compatiable with the multiplication-by-$n$ map on $A$. Now, we define the torsor $\mathcal{T}_n$ coming from the covering $[n] : Y \to Y$. This torsor defines a class in $\mathrm{H}^1_\et(A, A[n])$. Taking wedge products, we get a class $\wedge^2 \mathcal{T}_n \in \mathrm{H}^2_\et(Y, \wedge^2 A[n])$. Using the fact that $\mathrm{H}^2_\et(\overline{Y}, \mu_n) = \mathrm{H}^2_\et(\overline{A}, \mu_n) = \mathrm{Hom}(\wedge^2 A[n], \mu_n)$, we get a pairing $$\mathrm{H}^2_\et(Y, \wedge^2 A[n]) \times \mathrm{H}^2_\et(\overline{Y}, \mu_n)^{G_k} \rightarrow \mathrm{H}^2_\et(Y, \mu_n).$$ Therefore we can define a map $s: \mathrm{H}^2_\et(\overline{Y}, \mu_n)^{G_k} \to \mathrm{H}^2_\et(Y, \mu_n)$ given by pairing with $\wedge^2 \mathcal{T}_n$. This map is proved to be a section in \cite{SkorobogatovZarhinKummerSurface}.

This retraction and section show that $\mathrm{H}^2(k, \mu_n)$ and $\mathrm{H}^2_\et(\overline{A}, \mu_n)^{G_k}$ are both direct summands of $\mathrm{H}^2_\et(A,\mu_n)$. Therefore, to finish the result, it is enough to show that $\ker(\beta)/\mathrm{Im}(\alpha) = \mathrm{H}^1(k, \mathrm{H}^1_\et(\overline{A},\mu_n))$ The short exact sequence coming from the spectral sequence $$\mathrm{H}^p(k, \mathrm{H}^q_\et(\overline{A}, \mu_n)) \Rightarrow \mathrm{H}^{p + q}_\et(A, \mu_n)$$ tells us that $$0 \to \ker(\beta)/\mathrm{Im}(\alpha) \to \mathrm{H}^1(k, \mathrm{H}^1_\et(\overline{A}, \mu_n)) \to 0$$ which is exactly what we want.
\end{proof}

Here, we note that we can take cyclic degree $p$ twists of $Y$. If $F$ is a cyclic degree $p$ extension of $k$, then we can form a twist of $Y$ by $F$ by taking a 1-cocycle $\xi \in \mathrm{H}^1(\mathrm{Gal}(F/k), \mathrm{Aut}(Y))$ such that $\xi$ maps into the subgroup of $\mathrm{Aut}(Y)$ generated by $\xi$. We denote this twist by $Y_\xi$. Then, we note that $Y_\xi$ also has an automorphism of order $p$ which we call $\zeta_\xi$. Furthermore, note that $Y[1 - \zeta] \cong Y_\xi[1 - \zeta_\xi]$ as $G_k$-modules due to the construction of the twist. We can apply the same resolution process as before to $Y_\xi$ and get a blow up $Y_\xi'$. Then, we can take the quotient $Y'_\xi/ \langle \zeta_\xi \rangle$ and get the same generalized Kummer variety $X$. We let $\sigma_\xi$ be the map $Y'_\xi \to Y_\xi$ and $\pi_\xi$ be the map $Y'_\xi \to X$. We let $Y''_\xi$ be a resolution of $Y'_\xi$. Let $Y_{\xi 0}$ be $Y_\xi \backslash Y_\xi[1- \zeta_\xi]$ and note that we can consider $Y_{\xi 0}$ as a subvariety of $Y'_\xi$ and $Y''_\xi$ as well.

\begin{corollary}
\label{cor:image}
For any integer $n > 1$ such that $n \equiv 1 \pmod{p}$ and for any $x \in \mathrm{H}^2_\et(X, \mu_n)$, there exists an $a \in \mathrm{H}^2(k, \mu_n)$ such that for any cyclic extension $F$ of $k$ of degree $p$ and twist $\xi$ of $Y$ by $F$, we have $\sigma_{\xi*} \pi^*_\xi(x)$ is the image of $a$ in $\mathrm{H}^2_\et(Y_\xi, \mu_n)$ from the map $\mathrm{H}^2(k, \mu_n) \to \mathrm{H}^2_\et(Y_\xi, \mu_n)$.
\end{corollary}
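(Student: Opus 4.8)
The plan is to run the decomposition of Proposition~\ref{prop:decomposition} in reverse. That proposition applies equally with $Y_\xi,\zeta_\xi$ in place of $Y,\zeta$: $Y_\xi$ is again a torsor of an abelian variety carrying a $k$-rational order-$p$ automorphism with fixed locus isomorphic to $T$, and its proof uses only that $n\equiv1\pmod p$ and that $\#T(\overline{k})$ is a power of $p$. So there is a $\zeta_\xi$-compatible direct sum $\mathrm{H}^2_\et(Y_\xi,\mu_n)=\mathrm{H}^2_\et(k,\mu_n)\oplus\mathrm{H}^1(k,\mathrm{H}^1_\et(\overline{Y}_\xi,\mu_n))\oplus\mathrm{H}^2_\et(\overline{Y}_\xi,\mu_n)^{G_k}$. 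Now $\pi_\xi^*(x)$ is pulled back along the quotient $Y'_\xi\to X=Y'_\xi/\langle\zeta_\xi\rangle$, hence is $\zeta_\xi$-invariant, and $\sigma_\xi$ is $\zeta_\xi$-equivariant, so $\sigma_{\xi*}\pi_\xi^*(x)$ is $\zeta_\xi$-invariant; write it as $a_\xi+b_\xi+c_\xi$ in the three summands. It suffices to prove $b_\xi=c_\xi=0$ and that the remaining class $a_\xi\in\mathrm{H}^2_\et(k,\mu_n)$ does not depend on $\xi$.

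The middle summand has no nonzero $\zeta_\xi$-fixed element: $\mathrm{H}^1_\et(\overline{Y}_\xi,\mu_n)\cong A^t[n]$ as a $\bZ[\zeta]$-module, and $1-\zeta$ is invertible there, since its kernel $A^t[1-\zeta]\cap A^t[n]$ is trivial --- $A^t[1-\zeta]$ is the Cartier dual of $T\subseteq A[p]$, so it lies in $A^t[p]$, while $\gcd(n,p)=1$. Applying $\mathrm{H}^1(k,-)$ keeps $1-\zeta$ invertible, so $b_\xi=0$.

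The third summand is the crux, and I expect it to be the main obstacle. Here the analogous argument fails: on $\mathrm{H}^2_\et(\overline{Y}_\xi,\mu_n)=\mathrm{Hom}(\wedge^2A[n],\mu_n)$ the operator $1-\zeta$ is \emph{not} invertible --- averaging a polarization of $A$ over $\langle\zeta\rangle$ gives a $\zeta$-invariant polarization, whose class is a nonzero $\zeta$-fixed element of this group. What one gets cheaply is that $c_\xi$ is \emph{independent of $\xi$} and geometric in origin: restricting to $Y_{\xi0}=\overline{Y}_\xi\setminus T=\overline{A}_0$ loses no information ($T$ has codimension $\ge2$), there $\overline{\pi}_\xi$ is the \'etale $\bZ/p$-quotient $\overline{A}_0\to\overline{X}_0$, and Hochschild--Serre ($p$ invertible mod $n$) identifies $c_\xi$ with the image of $x$ under the $\xi$-free composite $\mathrm{H}^2_\et(X,\mu_n)\to\mathrm{H}^2_\et(\overline{X}_0,\mu_n)=\mathrm{H}^2_\et(\overline{A}_0,\mu_n)^\zeta\hookrightarrow\mathrm{H}^2_\et(\overline{A},\mu_n)$. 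The remaining --- and hardest --- point is to show this class vanishes, equivalently that $\sigma_{\xi*}\pi_\xi^*(x)$ genuinely lies in $\mathrm{H}^2_\et(k,\mu_n)$ (which maps to $0$ in $\mathrm{H}^2_\et(\overline{Y}_\xi,\mu_n)$); for this I would bring in the torsion-freeness of $\mathrm{Pic}(\overline{X})$, the finite-kernel comparison $\mathrm{Br}(\overline{X})\to\mathrm{Br}(\overline{A})$ of Proposition~\ref{prop:finiteKernel}, and the finiteness of Corollary~\ref{cor:finiteBrauer}. This is the generalized-Kummer analogue of the transcendental-Brauer-group analysis for $p=2$ in \cite{SkorobogatovZarhinKummer}.

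Once $b_\xi=c_\xi=0$ we have $\sigma_{\xi*}\pi_\xi^*(x)=a_\xi\in\mathrm{H}^2_\et(k,\mu_n)$, and only independence of $\xi$ is left. Apply the retraction $r$ built in the proof of Proposition~\ref{prop:decomposition}: restriction to $T\subset Y_\xi$, followed by the sum of corestrictions $\mathrm{H}^2_\et(T,\mu_n)\to\mathrm{H}^2_\et(k,\mu_n)$, followed by multiplication by the fixed $m$ with $\#T(\overline{k})\cdot m\equiv1\pmod n$; since $r$ retracts the structure map, $a_\xi=m\cdot\mathrm{cores}\bigl((\sigma_{\xi*}\pi_\xi^*(x))|_T\bigr)$. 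Now \'etale-locally at each point $t\in T$ the diagram $Y_\xi\xleftarrow{\sigma_\xi}Y'_\xi\xrightarrow{\pi_\xi}X$ is the $\xi$-independent local model $\mathbb{A}^g\leftarrow\mathrm{Bl}_J\mathbb{A}^g\to(\mathrm{Bl}_J\mathbb{A}^g)/M$, with $M$ the linearization of $\zeta$ at $t$ --- unchanged by the twist because $\zeta$ fixes $t$ --- so the germ of $\sigma_{\xi*}\pi_\xi^*(x)$ at $t$, and hence its restriction to $t$, is computed from $x$ alone. Therefore $a:=m\cdot\mathrm{cores}\bigl((\sigma_{\xi*}\pi_\xi^*(x))|_T\bigr)$ is independent of $\xi$ and equals each $a_\xi$, finishing the proof.
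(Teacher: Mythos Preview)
Your treatment of $b_\xi=0$ is clean and correct; the real gap is at $c_\xi$. You correctly flag this as the hard step, but the tools you list (torsion-freeness of $\mathrm{Pic}(\overline X)$, Proposition~\ref{prop:finiteKernel}, Corollary~\ref{cor:finiteBrauer}) will not give it, and in fact the claim $c_\xi=0$ is false for general $x$. By your own description $c_\xi$ is the image of $x$ under $\mathrm{H}^2_\et(X,\mu_n)\to\mathrm{H}^2_\et(\overline X_0,\mu_n)\to\mathrm{H}^2_\et(\overline A,\mu_n)$; taking $x$ to be the Kummer class of an ample $L\in\mathrm{Pic}(X)$ makes this nonzero. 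Your observation that $(\mathrm{H}^2_\et(\overline A,\mu_n))^{G_k}$ carries nonzero $\zeta$-invariants (e.g.\ from a $\zeta$-invariant polarization) is exactly right, and it shows that the paper's own one-line appeal to Proposition~\ref{prop:decomposition} here is at best incomplete: $\zeta$-invariance alone does not force $\sigma_{\xi*}\pi^*_\xi(x)$ into $\mathrm{H}^2(k,\mu_n)$. What the main theorem actually consumes is only the image in $\mathrm{Br}(Y_\xi)[n]$, where Picard contributions die, so the corollary should really be read (or restated) at the Brauer level; your proposed route cannot reach the $\mathrm{H}^2(\cdot,\mu_n)$ statement as written.

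For the independence of $a_\xi$ your \'etale-local-model argument is too loose to stand on its own: restriction of an \'etale cohomology class to a closed point is a global-to-local map, not something computable from a formal or \'etale neighbourhood, so ``the local picture near $t$ is $\xi$-independent'' does not directly yield that $(\sigma_{\xi*}\pi^*_\xi(x))|_T$ is $\xi$-independent. The paper's route is different and sharper. It passes to the resolution $Y''_\xi$ and restricts to the exceptional divisor $D''\subset Y''_\xi$, which together with the map $D''\to D\subset X$ and a fixed section $\rho:T\to D''$ is genuinely $\xi$-independent. The key step is to control the discrepancy $y:=\sigma_\xi^*\sigma_{\xi*}\pi_\xi^*(x)-\pi_\xi^*(x)$: its image in $\mathrm{Br}(Y''_\xi)$ vanishes by Grothendieck purity (each component of $D''$ is a simply connected toric variety), hence $y$ restricts to zero at every closed point, in particular along $\rho$. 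This converts $\tau_3\sigma_{\xi*}\pi^*_\xi(x)$ into $\rho^*\,(\pi^*|_{D''})\,\tau_1(x)$, visibly independent of $\xi$. Your local-model heuristic is the shadow of this, but the Brauer-purity step is what makes the argument honest.
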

\begin{proof}
We note that $\sigma_{\xi*} \pi^*_\xi(x)$ is $\zeta_\xi$-invariant since the map $\sigma_{\xi*}$ is $\zeta_\xi$-equivariant and $\pi^*_\xi(x)$ is $\zeta_\xi$-invariant. So Proposition \ref{prop:decomposition} tells us that it must lie in the image of $\mathrm{H}^2(k, \mu_n) \to \mathrm{H}^2_\et(Y_\xi, \mu_n)$. We just need to prove that this $a \in \mathrm{H}^2(k, \mu_n)$ is independent of the twist $\xi$.

First, we let $D = X \backslash X_0$ be the exceptional divisor in $X$. Similarly, we let $D'' = Y''_\xi \backslash Y_{\xi 0}$ be the exceptional divisor in $Y''_\xi$. Note that $D$ and $D''$ do not depend on the twist $\xi$ since the blow-up locus stays the same as Galois modules under the twist. Therefore, we get a well-defined map $D'' \to D$ which does not depend on $\xi$. Since the irreducible components of $D''$ are toric varieties defined over $k$, we can choose a $k$-rational point in each to get a section $\rho : T \to D''$. Putting this all together with the natural restriction maps give us

\begin{equation}
\label{eq:diagram}
\begin{tikzcd}
{\mathrm{H}^2_\et(X, \mu_n)} \arrow[d, "\tau_1"] \arrow[r, "\pi_\xi^*"] \arrow[r] & {\mathrm{H}^2_\et(Y''_\xi, \mu_n)} \arrow[r, "\sigma_{\xi*}"] \arrow[d, "\tau_2"] & {\mathrm{H}^2_\et(Y_\xi, \mu_n)} \arrow[d, "\tau_3"] \arrow[l, "\sigma^*_\xi", shift left] \\
{\mathrm{H}^2_\et(D,\mu_n)} \arrow[r, "\pi^*|_{D''}"]                           & {\mathrm{H}^2_\et(D'' ,\mu_n)} \arrow[r, "\rho^*", shift left]               & {\mathrm{H}^2_\et(T, \mu_n)} \arrow[l]                                                 
\end{tikzcd}
\end{equation}

By the construction in Proposition \ref{prop:decomposition}, $a$ is the element in $\mathrm{H}^2(k,\mu_n)$ obtained by taking $\sigma_{\xi*} \pi_\xi^*(x) \in \mathrm{H}^2_\et(Y_\xi, \mu_n)$, restricting to $\mathrm{H}^2_\et(T, \mu_n)$, corestricting to $\mathrm{H}^2(k, \mu_n)$ and then multiplying by $m$. In other words, it is enough to show that $\tau_3 \sigma_{\xi*} \pi^*_\xi(x)$ does not depend on $\xi$.

Let $y = \sigma^*_\xi\sigma_{\xi*} \pi^*_\xi(x) - \pi^*_\xi(x)$ in $\mathrm{H}^2_\et(Y''_\xi, \mu_n)$. We claim that $\rho^* \tau_2(y) = 0$ in $\mathrm{H}^2_\et(T, \mu_n)$. This will follow from the facts that $T = \oplus_i k_i$, and for any closed point $\iota : \mathrm{Spec}(K) \hookrightarrow Y''_\xi$, we have that $\iota^*(y) = 0 \in \mathrm{H}^2(K, \mu_n)$. To show the latter fact, we note that $\sigma_{\xi*}(y) = 0$ since $\sigma_{\xi*}\sigma_\xi^* = \mathrm{id}$, which implies that restricting $y$ to $\mathrm{H}^2_\et(Y_{\xi 0}, \mu_n)$ is also $0$. Then, we have that

\begin{equation*}
\begin{tikzcd}
{\mathrm{H}^2_\et(Y_\xi'', \mu_n)} \arrow[r] \arrow[d] & \mathrm{Br}(Y_\xi'') \arrow[d, hook] \\
{\mathrm{H}^2_\et(Y_{\xi 0}, \mu_n)} \arrow[r]          & \mathrm{Br}(Y_{\xi 0})               
\end{tikzcd}
\end{equation*}

The injection $\mathrm{Br}(Y_\xi'') \hookrightarrow \mathrm{Br}(Y_{\xi 0})$ is due to Grothendieck's Purity theorem for the Brauer group since each irreducible component of $D''$ is simply connected. This implies that the image of $y$ in $\mathrm{Br}(Y_{\xi 0})$ is $0$ so the image of $y$ in $\mathrm{Br}(Y''_\xi)$ must also be $0$ due to the injectivity of the right hand side map. Finally, we use the fact that $\mathrm{H}^2(K, \mu_n) \to \mathrm{Br}(K)$ is injective coming from the Kummer sequence and Hilbert's Theorem 90.

Now, since $\rho^* \tau_2(y) = 0$, the definition of $y$ tells us that $$\rho^*\tau_2\pi_\xi^*(x) = \rho^* \tau_2 \sigma^*_\xi \sigma_{\xi*} \pi^*_\xi(x).$$ By the commutativity of the right square of \ref{eq:diagram}, and the fact that $\rho^*$ is a section tell us that $\rho^*\tau_2 \sigma^*_\xi = \tau_3$. Therefore, we have $$\rho^* \tau_2 \pi^*_\xi(x) = \tau_3 \sigma_{\xi*} \pi^*_\xi(x).$$ Then the commutativity of the left square gives us that $\tau_2 \pi^*_\xi(x) = \tau_1(x)\pi^*|_{D''}$. Therefore, $$\rho^* \tau_2 \pi_\xi^*(x) = \rho^* \tau_1 \pi^*|_{D''}(x),$$ and this last quantity, and therefore $a$, does not depend on $\xi$.
\end{proof}

\begin{theorem}
For any subgroup $B \subseteq \mathrm{Br}(X)$, if $X(\mathbb{A}_k)^B \neq \emptyset$, then $X(\mathbb{A}_k)^{B + \mathrm{Br}(X)[p^\perp]} \neq \emptyset$
\end{theorem}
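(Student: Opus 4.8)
The plan is to take an arbitrary adelic point $(P_v)\in X(\mathbb{A}_k)^B$, move it slightly so that every component lands on the smooth locus $X_0$, and then show that this \emph{same} point is automatically orthogonal to all of $\mathrm{Br}(X)[p^\perp]$. The engine is Corollary~\ref{cor:image}: the class $a\in\mathrm{H}^2(k,\mu_n)$ attached there to a class on $X$ is independent of the twist $\xi$. This twist-independence is exactly what lets us use a different twist at each place, so that we never have to produce a single global twist splitting the lifting obstruction of $(P_v)$ simultaneously everywhere; this extends the strategy used for $p=2$ in \cite{SkorobogatovZarhinKummer}.

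First I would carry out the reduction to $X_0$. By Corollary~\ref{cor:finiteBrauer} the group $\mathrm{Br}(X)/\mathrm{Br}_0(X)$ is finite, and $\mathrm{Br}_0(X)$ pairs trivially with $X(\mathbb{A}_k)$, so only finitely many classes modulo $\mathrm{Br}_0(X)$ are relevant for $B$; since their evaluation maps are locally constant and $X$ is projective (so $X(\mathbb{A}_k)=\prod_v X(k_v)$), the set $X(\mathbb{A}_k)^B$ is open. Because $X$ is smooth and $X_0\subseteq X$ is open and dense, the divisor $X\setminus X_0$ is $v$-adically nowhere dense, so $X_0(k_v)$ is dense in $X(k_v)$ for every $v$. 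Hence $\prod_v X_0(k_v)$ is dense in $\prod_v X(k_v)$, and we may replace $(P_v)$ by a point of $\prod_v X_0(k_v)$ that still lies in $X(\mathbb{A}_k)^B$.

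Now fix $\alpha\in\mathrm{Br}(X)[p^\perp]$; we may assume it has order $m>1$ with $p\nmid m$, and we choose $n>1$ with $m\mid n$ and $n\equiv 1\pmod p$ (possible since $m$ and $p$ are coprime). Lift $\alpha$ to $\tilde\alpha\in\mathrm{H}^2_\et(X,\mu_n)$; the ambiguity of the lift lies in the image of $\mathrm{Pic}(X)/n$, which is annihilated by $P_v^*$ since it factors through $\mathrm{Pic}(k_v)/n=0$, so $\alpha(P_v)$ is the image in $\mathrm{Br}(k_v)$ of $P_v^*\tilde\alpha$. Let $a\in\mathrm{H}^2(k,\mu_n)$ be the twist-independent class furnished by Corollary~\ref{cor:image} for $x=\tilde\alpha$. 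Fix a place $v$. Since the obstruction to lifting $P_v\in X_0(k_v)$ to $Y_{\xi 0}(k_v)$ lies in $\mathrm{H}^1(k_v,\mathbb{Z}/p)$ and $\mathrm{H}^1(k,\mathbb{Z}/p)\to\mathrm{H}^1(k_v,\mathbb{Z}/p)$ is surjective (Grunwald--Wang, with no exceptional case since $p$ is an odd prime; or weak approximation when $\mu_p\subseteq k$), there is a twist $\xi$ of $Y$ and a point $Q_v\in Y_{\xi 0}(k_v)$ with $\pi_\xi(Q_v)=P_v$. As in the proof of Corollary~\ref{cor:image}, the class $\sigma_\xi^*\sigma_{\xi*}\pi_\xi^*\tilde\alpha-\pi_\xi^*\tilde\alpha$ restricts to $0$ on $Y_{\xi 0}$, and $\sigma_{\xi*}\pi_\xi^*\tilde\alpha$ is the pullback of $a$ to $Y_\xi$; thus on $Y_{\xi 0}$ the class $\pi_\xi^*\tilde\alpha$ equals the pullback of $a$ along the structure morphism, and evaluating at $Q_v$ yields $P_v^*\tilde\alpha=\mathrm{res}_v(a)$. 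Because $a$ is the same for every twist, this identity holds at every place $v$, and so $\sum_v\mathrm{inv}_v\,\alpha(P_v)=\sum_v\mathrm{inv}_v\,\mathrm{res}_v(\overline{a})=0$ by global reciprocity, where $\overline{a}$ is the image of $a$ in $\mathrm{Br}(k)$.

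As this holds for every $\alpha\in\mathrm{Br}(X)[p^\perp]$, our point lies in $X(\mathbb{A}_k)^{\mathrm{Br}(X)[p^\perp]}\cap X(\mathbb{A}_k)^B=X(\mathbb{A}_k)^{B+\mathrm{Br}(X)[p^\perp]}$, which is therefore nonempty. I do not expect a significant obstacle in this argument, since the real content sits inside Corollary~\ref{cor:image} --- and specifically in the twist-independence of $a$, which is what removes the need to split the lifting obstruction of $(P_v)$ by one global twist at all places at once. The only points needing care are the $v$-adic density argument reducing to $\prod_v X_0(k_v)$ while staying in the open set $X(\mathbb{A}_k)^B$, and the verification that a global cyclic $\mathbb{Z}/p$-extension can be chosen to kill the local lifting obstruction at a given place; the latter is where $p\neq 2$ is invoked, although for prime degree the Grunwald--Wang exception does not occur in any case.
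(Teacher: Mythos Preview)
Your proposal is correct and follows essentially the same route as the paper: reduce to $X_0$ using openness of $X(\mathbb{A}_k)^B$ (via finiteness of $\mathrm{Br}(X)/\mathrm{Br}_0(X)$), lift each $P_v$ to $Y_{\xi 0}(k_v)$ for a place-dependent twist $\xi$ chosen via Grunwald--Wang, and then invoke the twist-independence of $a$ from Corollary~\ref{cor:image} together with global reciprocity. The only cosmetic difference is that the paper first uses finiteness of $\mathrm{Br}(X)/\mathrm{Br}_0(X)$ to pick a single $n\equiv 1\pmod p$ covering all of $\mathrm{Br}(X)[p^\perp]$ modulo $\mathrm{Br}_0(X)$, whereas you work one class $\alpha$ at a time with its own $n$; either variant is fine.
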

\begin{proof}
By Corollary \ref{cor:finiteBrauer}, $\mathrm{Br}(X)/\mathrm{Br}_0(X)$ is finite. Therefore, we can assume that $B$ is finite, since we can replace $B$ with a finite subgroup that has the same image in $\mathrm{Br}(X)/\mathrm{Br}_0(X)$. Furthermore, we can replace $B$ with $B[p^\infty]$, where $B[p^\infty]$ is the $p$-primary subgroup of $B$, since $X(\mathbb{A}_k)^{B[p^\infty]}$ will also be nonempty and $B + \mathrm{Br}(X)[p^\perp] = B[p^\infty] + \mathrm{Br}(X)[p^\perp]$. Also, again because $\mathrm{Br}(X)/\mathrm{Br}_0(X)$ is finite, $$\mathrm{Br}(X)[p^\perp]/(\mathrm{Br}(X)[p^\perp] \cap \mathrm{Br}_0(X))$$ is finitely generated. By taking the least common multiple of the orders of these finite generators, we get an integer $n$ coprime to $p$ such that the images of $\mathrm{Br}(X)[p^\perp]$ and $\mathrm{Br}(X)[n]$ in $\mathrm{Br}(X)/\mathrm{Br}_0(X)$ are the same. Furthermore, by taking a suitable power of $n$, we can assume that $n \equiv 1 \pmod{p}$. Therefore, we look to show that $X(\mathbb{A}_k)^{B + \mathrm{Br}(X)[n]} \neq \emptyset$.

We use the maps $$\mathrm{H}^2_\et(X, \mu_n) \xrightarrow{\pi^*} \mathrm{H}^2_\et(Y', \mu_n) \xrightarrow{\sigma_*} \mathrm{H}^2_\et(Y, \mu_n)$$ where the second map is the composition of the maps $\mathrm{H}^2_\et(Y', \mu_n) \to \mathrm{H}^2_\et(Y_0, \mu_n) \to \mathrm{H}^2_\et(Y, \mu_n)$. Here, the last map is the inverse of the isomorphism $\mathrm{H}^2_\et(Y, \mu_n) \to \mathrm{H}^2_\et(Y_0, \mu_n)$. This, combined with the Kummer sequence, gives us a diagram

\begin{equation}
\label{eq:Kummer}
\begin{tikzcd}
{\mathrm{H}^2_\et(Y, \mu_n)} \arrow[r] & {\mathrm{Br}(Y)[n]}  \arrow[r] & 0 \\
{\mathrm{H}^2_\et(X,\mu_n)} \arrow[r]  \arrow[u, "\sigma_*\pi^*"]          & {\mathrm{Br}(X)[n]} \arrow[u, "\sigma_*\pi^*"] \arrow[r]           & 0
\end{tikzcd}
\end{equation} Furthermore, we get a similar diagram replacing $Y$ with any twist $Y_\xi$ and the maps with the respective maps $\sigma_{\xi*}$ and $\pi^*_\xi$.

Now, if we start with some $\mathcal{B} \in \mathrm{Br}(X)[n]$, we can find some lift $x \in \mathrm{H}^2_\et(X, \mu_n)$. Diagram \ref{eq:Kummer} tells us that $\sigma_{\xi*}\pi^*_\xi(\mathcal{B}) \in \mathrm{Br}(Y_\xi)[n]$ is mapped to by $\sigma_{\xi*}\pi^*_\xi(x) \in \mathrm{H}^2_\et(Y_\xi, \mu_n)$. Therefore, by Corollary \ref{cor:image}, there is an $a \in \mathrm{H}^2(k, \mu_n)$ such that $\sigma_{\xi*}\pi^*_\xi(x) = a \in \mathrm{H}^2_\et(Y_\xi, \mu_n)$.

Now, if we have $(P_v) \in X(\mathbb{A}_k)^B$, then we can look at the preimage of this point under $\pi$ in $Y'$. For each $v$, $P_v \in X(k_v)$ might not lift to a rational point on $Y'(k_v)$. However, the preimage of $P_v$ must either be a single point or $p$ distinct (geometric) points since $X$ is the quotient of $Y'$ by the action of $\zeta$. If the preimage is a single point, then it must be rational, and if the preimage is $p$ points, then they are either all rational or live in a cyclic degree $p$ extension $F_v$ of $k_v$. This implies that, for each place $v$ of $k$, we can take a twist $\xi_v$ corresponding to the extension $F_v$ such that the twist $Y'_{\xi_v}$ of $Y'_{k_v}$ has $P_v$ lifting to rational points on $Y'_{\xi_v}(k_v)$. Furthermore, we can note that there is a extension $F$ of $k$ such that the completion of $F$ at $v$ is exactly $F_v$ by the Grunwald-Wang theorem, e.g. \cite[Theorem 9.2.8]{CohomologyNumberFields}. This implies that $Y'_{\xi_v} = Y'_{\xi} \times_k k_v$ which means that $Y'_{\xi_v}$ is defined over $k$.

All of this means that, for each place $v$ of $k$, we can find a point $R_v' \in Y'_{\xi_v}(k_v)$ such that $\pi_{\xi_v}(R_v') = P_v$. Then, we let $R_v = \sigma_{\xi_v}(R_v')$. Furthermore, since we assume $B$ to be finite, $X(\mathbb{A}_k)^B$ is open in $X(\mathbb{A}_k)$ and so we can perturb $(P_v)$ slightly so that $R_v$ avoids the fixed locus of $\zeta_{\xi_v}$ in $Y_{\xi_v}$. This implies that $R_v'$ is the unique point in $Y'_{\xi_v}$ such that $R_v = \sigma_{\xi_v}(R_v')$. Then, we have that $\mathcal{B}(P_v) = \sigma_{\xi_v*}\pi_{\xi_v}^*(\mathcal{B})(R_v)$. As we have seen, $\sigma_{\xi_v *}\pi_{\xi_v}^*(\mathcal{B})$ is the image of $a \in \mathrm{H}^2(k, \mu_n)$, and since this is a global Brauer class and independent of $F$, the sum of the local Brauer classes $\sigma_{\xi_v *}\pi_{\xi_v}^*(\mathcal{B})(M_v)$ must be zero. This implies that the sum of the $\mathcal{B}(Q_v)$ is also zero. Therefore, since $\mathcal{B}$ was an arbitrary element of $\mathrm{Br}(X)[n]$, this implies that $(Q_v)$ lies in $X(\mathbb{A}_k)^{\mathrm{Br}(X)[n]} = X(\mathbb{A}_k)^{\mathrm{Br}(X)[p^\perp]}$, and since $(Q_v)$ is also in $X(\mathbb{A}_k)^B$, we have that $(Q_v) \in X(\mathbb{A}_k)^{B + \mathrm{Br}(X)[p^\perp]}$.
\end{proof}

\begin{remark}
If we let $B$ be the trivial subgroup of $\mathrm{Br}(X)$ in the previous theorem, then the statement becomes $$X(\mathbb{A}_k) \neq \emptyset \Rightarrow X(\mathbb{A}_k)^{\mathrm{Br}(X)[p^\perp]} \neq \emptyset.$$
\end{remark}

\section{Application}
\begin{proposition}
Let $C$ be the curve given by $ax^3 + by^3 + cz^3 = 0$ in $\mathbb{P}^2_\mathbb{Q}$, then $C$ is a torsor of the elliptic curve $E := y^2 = x^3 - 144a^2b^2c^2$. If we look at the surface $C \times C$, it is a torsor of $E \times E$, and we have an automorphism $\rho$ on $C \times C$ given by $(P,Q) \mapsto (Q,R)$, where the points $P,Q,R$ are colinear. This automorphism has order $3$ and so gives rise to a generalized Kummer variety $X$ using the resolution in Example \ref{ex:E}. Then, there is no Brauer-Manin obstruction to the local-global principle on $X$.
\end{proposition}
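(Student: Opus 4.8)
The plan is to use the main theorem of this paper to reduce the problem to the $3$-primary part of $\mathrm{Br}(X)$, and then to show that this part contributes nothing, using the explicit description of $X$ worked out in \cite{van2007cubic}. First one checks that the situation fits the framework with $p=3$. Here $A:=E\times E$ is an abelian variety of dimension $2$ over $\bQ$; after identifying $E$ with $\mathrm{Pic}^0(C)$, three points of $C$ are colinear exactly when they sum to $0$, so $\rho$ is compatible with the order-$3$ automorphism $\zeta(u,v)=(v,-u-v)$ of $A$, which is defined over $\bQ$. A direct computation gives $A[1-\zeta]=\brc{(t,t):t\in E[3]}$, the diagonal copy of $E[3]$, so $\zeta$ has exactly nine, in particular finitely many, fixed points. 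Finally the torsor $C\times C$ has class $(c,c)$ in $\mathrm{H}^1(\bQ,A)=\mathrm{H}^1(\bQ,E)^{\oplus 2}$ with $c=[C]$; since the plane cubic $C$ carries the $\bQ$-rational divisor class $\O_C(1)$ of degree $3$ we have $3c=0$, so $c$ lifts along the surjection $\mathrm{H}^1(\bQ,E[3])\twoheadrightarrow\mathrm{H}^1(\bQ,E)[3]$ and hence $(c,c)$ lies in the image of $\mathrm{H}^1(\bQ,A[1-\zeta])\to\mathrm{H}^1(\bQ,A)$ coming from the diagonal. Thus $X$ is a generalized Kummer variety associated to $A$ and $\zeta$ in the sense of this paper.

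Now apply the main theorem with $B=\mathrm{Br}(X)[3^\infty]$. If $\mathrm{Br}(X)[3^\infty]\subseteq\mathrm{Br}_0(X)$ then $X(\mathbb{A}_\bQ)^{\mathrm{Br}(X)[3^\infty]}=X(\mathbb{A}_\bQ)$, since constant classes pair trivially; so whenever $X(\mathbb{A}_\bQ)\neq\emptyset$ the main theorem gives $X(\mathbb{A}_\bQ)^{\mathrm{Br}(X)[3^\infty]+\mathrm{Br}(X)[3^\perp]}\neq\emptyset$, and since $\mathrm{Br}(X)$ is torsion with $\mathrm{Br}(X)/\mathrm{Br}_0(X)$ finite (Corollary \ref{cor:finiteBrauer}), the subgroups $\mathrm{Br}(X)[3^\infty]$, $\mathrm{Br}(X)[3^\perp]$ and $\mathrm{Br}_0(X)$ generate $\mathrm{Br}(X)$, whence $X(\mathbb{A}_\bQ)^{\mathrm{Br}(X)}\neq\emptyset$. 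So the entire task is to prove $\mathrm{Br}(X)[3^\infty]\subseteq\mathrm{Br}_0(X)$; by the filtration $\mathrm{Br}_0(X)\subseteq\mathrm{Br}_1(X)\subseteq\mathrm{Br}(X)$ this amounts to showing that both $\mathrm{H}^1(\bQ,\mathrm{Pic}(\overline X))$ and $\mathrm{Br}(\overline X)^{G_\bQ}$ have no $3$-torsion.

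This is where \cite{van2007cubic} enters. Since $E:y^2=x^3-144a^2b^2c^2$ has $j$-invariant $0$ it has complex multiplication by $\bZ[\omega]$, and $X$ is a K3 surface of Picard number $20$; its Néron--Severi group is generated by the eighteen exceptional curves of $\sigma$, permuted by $G_\bQ$ through its action on the $E[3]$-torsor $(C\times C)[1-\rho]$ with the two curves over each fixed point possibly interchanged, together with the pullback of $\mathrm{NS}(\overline{E\times E})^\zeta$ (of rank $2$). From this presentation one computes $\mathrm{H}^1(\bQ,\mathrm{Pic}(\overline X))[3]=0$ directly. For the transcendental part one uses the $G_\bQ$-equivariant map $\mathrm{Br}(\overline X)\to\mathrm{Br}(\overline{E\times E})$ supplied by Proposition \ref{prop:finiteKernel} (note $\mathrm{Br}(\overline{C\times C})\cong\mathrm{Br}(\overline{E\times E})$ as $G_\bQ$-modules, since translations act trivially on the Brauer group of an abelian variety), whose kernel is a finite $3$-torsion subgroup of $\mathrm{H}^1(\langle\zeta\rangle,\mathrm{NS}(\overline{E\times E}))$, together with the standard identification of $\mathrm{Br}(\overline{E\times E})[3]$ with $\mathrm{Hom}(E[3],E[3])$ modulo the image of $\mathrm{End}(E)$; here $G_\bQ$ acts through its action on $E[3]$, which for a curve of $j$-invariant $0$ is confined to the normalizer of a Cartan subgroup of $\mathrm{GL}_2(\mathbb{F}_3)$. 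One then checks that $\mathrm{Br}(\overline X)^{G_\bQ}[3]$ and the kernel above vanish, so that $\mathrm{Br}(X)/\mathrm{Br}_0(X)$ has no $3$-torsion and the argument closes.

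The main obstacle is exactly this last Galois-cohomology computation: one must pin down, uniformly in $a,b,c$, the $G_\bQ$-module structures of the transcendental lattice of $X$, of the configuration of exceptional curves, and of the finite kernel of Proposition \ref{prop:finiteKernel}, and verify that no $3$-primary class survives all of Galois invariance and the comparison with $\overline{E\times E}$. It is the CM forced by $j=0$ — which rigidifies the transcendental lattice to rank $2$ with Galois acting through $\Gal(\bQ(\sqrt{-3})/\bQ)$ — that makes this feasible. Should a nonzero $3$-primary Brauer class nevertheless persist on $X$ over $\bQ$, the alternative is to evaluate it adelically: it extends to an Azumaya algebra over a smooth proper model over $\bZ[1/N]$, where the local evaluation is trivial at the places of good reduction, leaving only finitely many local conditions to balance.
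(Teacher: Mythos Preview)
Your overall strategy matches the paper's exactly: verify the hypotheses of the main theorem with $p=3$, then reduce to showing that $\mathrm{Br}(X)/\mathrm{Br}_0(X)$ has trivial $3$-primary part. The difference is in how this last point is established.

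You propose to carry out the Galois-cohomology computation of $\mathrm{H}^1(\bQ,\mathrm{Pic}(\overline X))[3]$ and $\mathrm{Br}(\overline X)^{G_\bQ}[3]$ by hand, using the exceptional configuration, the CM structure at $j=0$, and the kernel from Proposition~\ref{prop:finiteKernel}, and you rightly flag this as ``the main obstacle.'' But none of this is needed: the reference \cite{van2007cubic} that you invoke for the geometry of $X$ already contains the full computation of the Brauer group. The paper's proof is two lines: by \cite{van2007cubic},
\[
\mathrm{Br}(X)/\mathrm{Br}_0(X)\cong
\begin{cases}
\bZ/2\bZ & \text{if }4abc\text{ is a cube in }\bQ,\\
0 & \text{otherwise},
\end{cases}
\]
so in either case there is no $3$-torsion, and the main theorem finishes the argument. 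Your sketch is not wrong, but it reinvents a result that is already available in the literature you cite; just quote the displayed isomorphism and you are done.
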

\begin{proof}
This follows from the fact in \cite{van2007cubic} that $$\mathrm{Br}(X)/\mathrm{Br}_0(X) \cong  \begin{cases} \mathbb{Z}/2\mathbb{Z} &\text{if } 4abc \text{ is a cube in } \mathbb{Q} \\ 0 &\text{otherwise}
\end{cases}$$ and the fact that this automorphism is order $3$ so that the $3$-primary subgroup of $\mathrm{Br}(X)/\mathrm{Br}_0(X)$ is always trivial.
\end{proof}
\begin{remark}
This was also proven by other means in \cite{navone2025transcendentalbrauergroupscubic}.
\end{remark}
\begin{remark}
If we assume Skorobogatov's conjecture that the Brauer-Manin obstruction to K3 surfaces is the only one, then the family of varieties satisfies the local-global principle.
\end{remark}
\section*{Acknowledgements}
First and foremost, I want to thank my advisor Joe Silverman for all the great geometric intuition he has imparted on me and for his wide breadth of knowledge. I want to thank Alexei Skorobogatov for help answering my questions. I would like to thank Giorgio Navone for many helpful discussions and insights.
\printbibliography
\end{document}